\documentclass[
final
]{dmtcs-episciences}


\usepackage[utf8]{inputenc}
\usepackage{subfigure}

%

\usepackage[round]{natbib}

\usepackage[justification=centering]{caption}

\newtheorem{thm}{Theorem}[section]
\newtheorem{lem}[thm]{Lemma}
\newtheorem{claim}[thm]{Claim}
\newtheorem{proposition}[thm]{Proposition}

\newtheorem{remark}[thm]{Remark}
\newtheorem{defi}[thm]{Definition}

\newcommand{\cF}{\mathcal{F}}
\newcommand{\cD}{\mathcal{D}}
\newcommand{\cS}{\mathcal{S}}

\author{Jovana Forcan\affiliationmark{1,2}
  \and Mirjana Mikalački\affiliationmark{1}\thanks{The research is partly supported by Provincial Government for Higher Education and Scientific Research, Republic of Serbia, Grant No.\ 142-451-3227/2020 and by Ministry of Education, Science and Technological Development, Republic of Serbia, Grant No.\ 451-03-68/2022-14/200125.}}
\title[Maker--Breaker total domination game on cubic graphs]{Maker--Breaker total domination game on cubic graphs}
\affiliation{
  University of Novi Sad, Faculty of Sciences, Department of Mathematics and Informatics, Serbia. \\
  University of East Sarajevo, Faculty of Philosophy, Bosnia and Herzegovina.}
\keywords{Positional games, Maker--Breaker game, Total domination game, Cubic graphs, Generalized Petersen graph}
\received{2021-09-28}
\revised{2022-04-18}
\accepted{2022-04-26}
\begin{document}
\publicationdetails{24}{2022}{1}{20}{8529}
\maketitle
\begin{abstract}
We study Maker--Breaker total domination game played by two players, Dominator and Staller, on the connected cubic graphs. Staller (playing the role of Maker) wins if she manages to claim an open neighbourhood of a vertex. Dominator wins otherwise (i.e.\ if he can claim a total dominating set of a graph). For certain graphs on $n\geq 6$ vertices, we give the characterization on those which are Dominator's win and those which are Staller's win. 
\end{abstract}

\section{Introduction}
In a Maker--Breaker game $(X, \cF)$, two players, called \emph{Maker} and \emph{Breaker}, take turns in claiming previously unclaimed element of the \emph{board} $X$, with one of them going first. Game ends when all elements of $X$ are claimed. Set $\cF \subseteq 2^X$ is called the family of winning sets. Maker wins the game $(X, \cF)$ if she claims all the elements of one $F \in \cF$ before the end of the game. Breaker wins otherwise. In this type of games, being the first player is always an advantage, and if the player wins the game as second player then he/she wins as a first player as well. We say that the game is a \emph{Maker's win}, if she has a strategy to win against every strategy of Breaker. Similarly, a \emph{Breaker's win} is defined.

Maker--Breaker games are well studied type of \emph{positional games}, combinatorial, perfect information games, with no chance moves. The pair $(X,\cF)$ is also known as the \emph{hypergraph of the game}. We refer the interested reader to the book of~\cite{BeckBook08} and the recent monograph of~\cite{HKSSBook14} for thorough analysis of various types of positional games. 

Maker--Breaker games have attracted a lot of attention in the last few decades, and many exciting results about various games have been proven. The standard approach is to consider the games in which the board is the edge set of a given graph, and the winning sets are some graph theoretic structures, such as the spanning trees, Hamilton cycles, perfect matchings and others.  A well studied class of graphs are certainly the complete graphs on $n$ vertices, for sufficiently large integer $n$.  However,  it turns out that in many such games Maker wins quite easily when both players claim just one element per move. \cite{CE78} first observed this in their seminal paper, and introduced \emph{the biased games}, the games in which Maker claims $1$ element per move, while Breaker is allowed to claim $b\geq1$ elements per move.  That initiated various types of studies of games, where Breaker is given more power, such as already mentioned biased games (for reference see e.g.~\cite{GS09, HKSS09, K11}), or making the board sparser (see e.g.~\cite{BP12, HKSS11, SS05}), and other constraints on the way players choose the elements they claim (see e.g.~\cite{CT16, EFPK15,FM20}).

One can also consider Maker--Breaker games on graphs in which players claim vertices instead of edges in each turn. One such type is being recently introduced as the \emph{Maker--Breaker domination} game (in the combinatorial setup) by~\cite{DGPR18}, inspired by the domination game introduced by~\cite{BKR10}. A \emph{dominating set} $D$ of a given graph $G$ is a set of vertices of $G$, such that every vertex not in $D$ has a neighbour in $D$. In a Maker--Breaker domination game on a graph $G$, one player whom we shall call the \emph{Dominator}, claims an unclaimed vertex of $G$ in his turn, aiming at claiming the dominating set of a given graph $G$. The other player, named \emph{Staller}, claims another unclaimed vertex in her turn, with goal to prevent Dominator from achieving his goal. In other words, Staller's goal is to claim a closed neighbourhood of some vertex in $G$. The names Dominator and Staller are used to be consistent with the literature considering both standard domination game (for standard rules, see~\cite{BKR10} and recent results, see e.g.~\cite{BKR13, DKR15}) and Maker--Breaker domination games. 

Also, to keep pace with the current notation in this type of games, Staller will be Maker, and Dominator will play the role of Breaker, as introduced by~\cite{DGPR18}, and used later by~\cite{GHIK19}.

Maker--Breaker total domination game (MBTD game for short) is introduced as a natural counterpart of the Maker--Breaker domination game by~\cite{GHIK19}. Here, given the graph $G$, the Dominator's goal is to claim the \emph{total} dominating set of $G$, i.e.\ a set $D$ s.t.\ each vertex of $G$ has a neighbour in $D$. On the other hand, Staller wins if she claims an open neighbourhood of some vertex $v$ of $G$. 

It is easy to see that Dominator wins the MBTD game on $G=K_n$, for any given $n\geq 2$, no matter who starts the game. Also, for $G=K_1$, the MBTD game is a Staller's win, as the graph has a unique vertex. Therefore, it is interesting to look at other graphs and characterize them according to the winner of the game.\\ 
Following the notation from \cite{GHIK19} we say that a graph $G$ is
\begin{enumerate}
\item[•] $\cD$, if Dominator wins the game,
\item[•] $\cS$, if Staller wins the game,
\item[•] $\mathcal{N}$, if first player wins the game. 
\end{enumerate}

Cubic graphs have been studied for quite a while with the respect to the domination number (see e.g.~\cite{Z96}) and also in the domination game, already mentioned above, most recently by~\cite{BIK20}. Moreover, connected cubic graphs are very interesting to study in MBDT game setup, as already noticed by~\cite{GHIK19}. The authors showed that there are infinitely many connected cubic graphs which are Staller's win, provided that she plays first. The result is connected with the number of total dominating sets in which the vertex set of a given graph $G$ can be partitioned, introduced by~\cite{CDH80}, for which it was later shown by~\cite{DHH17} that infinitely many connected cubic graphs exist.  

Motivated by the question of~\cite{GHIK19}, we study MBTD game on connected cubic graphs ($3$-regular graphs) and for some classes of connected cubic graphs we are able to give exact characterization which ones are in $\cD$ and which are in $\cS$. From the paper of~\cite{GHIK19} we know that
\begin{enumerate}
\item[•] there are infinitely many examples of connected cubic graphs in which Staller wins the game as the first player, 
\item[•] no minimum degree condition is sufficient to guarantee that Dominator wins in the game in which Staller is the first player. 
\end{enumerate}

Some research on this topic has been conducted previously, although not under the name MBTD game. Namely, in his PhD thesis,~\cite{Kutz04} considered the Maker--Breaker games on almost-disjoint hypergraphs of rank three (edges with up to three vertices intersecting in at most one vertex), where  the players alternately claim vertices of a given hypergraph. In an almost disjoint hypergraph of rank three it can be decided in polynomial time whether Maker or Breaker wins, as shown by~\cite{Kutz04}.

Looking at the winning sets of Staller in our MBTD game, we would have a hypergraph whose all edges have three vertices (still a rank three hypergraph), but the key difference is that majority of the graphs that we consider are not almost-disjoint, as a lot of intersections exist among the edges. We also look at some graphs whose hypergraph would have almost-disjoint hyperedges (e.g. the union of at least four vertex-disjoint $K_{1,3}$). In this case, the analysis of~\cite{Kutz04} would require searching through all possible pairs of first moves, and then applying some reductions when Staller is the first player. However, to be able to classify these graphs, we provide in this paper a more applicable and explicit winning strategy for the players.

To determine which connected cubic graphs are Dominator's win and which are Staller's win, we use the following  classification of cubic graphs. In a cubic graph on $n\geq 6$ vertices each vertex has only three possibilities, as shown by~\cite{K84}:
\begin{enumerate}
\item[1.] it lies in two triangles (Figure \ref{f3a}),
\item[2.] it lies in one triangle (Figure \ref{f3b}),
\item[3.] it lies in zero triangles (Figure \ref{f3c}).
\end{enumerate}
\begin{figure}[h]
    \centering
    \subfigure[Vertex can lie in two triangles \label{f3a}]{\includegraphics[scale=0.8]{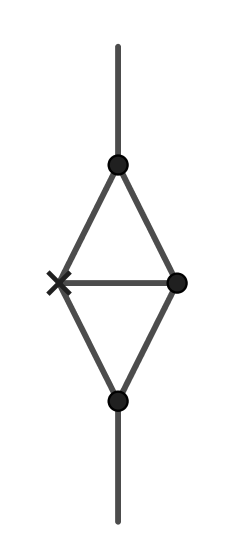}} 
    \hfill
    \subfigure[Vertex can lie in one triangles \label{f3b}]{\includegraphics[scale=0.8]{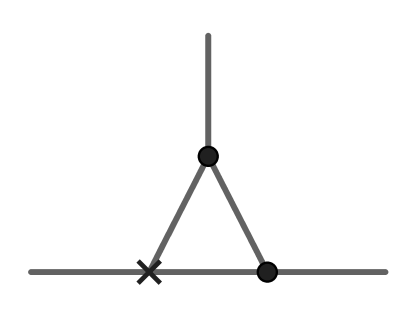}} 
     \hfill
    \subfigure[Vertex can lie in no triangles \label{f3c}]{\includegraphics[scale=0.8]{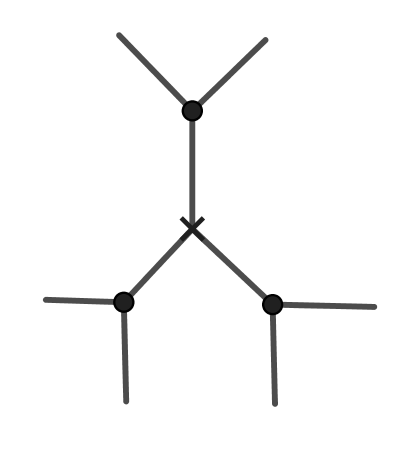}}
    \caption{Different possible locations for vertices in cubic graph of order $n\geq 6$.}
    \label{f3}
\end{figure}

So, cubic graphs can be classified according to the number of vertices of type 1 (being in two triangles), type 2 (being in one triangle) and type 3 (being in no triangle). Let $T_1, T_2$ and $T_3$ denote the number of vertices of type 1, type 2 and type 3, respectively. These three numbers are related by the following formulas of~\cite{K84}:
$$ T_1 = 2k_1 \quad T_2 = T_1 + 3k_2 \quad T_1 + T_2 + T_3 =n,$$
where $k_1$ and $k_2$ are nonnegative integers.  

If the cubic graph contains vertices of type 1, then this means that $G$ contains at least one \textit{diamond}, that is, the complete graph on four vertices minus one edge. 

Hereinafter, when we say a triangle we refer to an induced $K_3$ which is not part of a diamond. 
We say that a vertex $x \in V(G)$ is adjacent to some triangle $Y\subseteq G$, where $V(Y) = \{y_1, y_2, y_3\}$, if $xy_i \in E(G)$, for some $i\in \{1,2,3\}$. Also, we say that two triangles $X\subseteq G $ and $Y \subseteq G$, with the vertex sets $V(X) = \{x_1, x_2, x_3\}$ and $V(Y) = \{y_1, y_2, y_3\}$, are adjacent if $x_iy_j \in E(G)$ for some $i,j \in \{1,2,3\}$. 

Let $H$ be a fixed graph on $h$ vertices and let $n=x\cdot h$. We say that a graph $G$ on $n$ vertices contains an $H$-factor if it contains $x$ vertex disjoint copies of $H$.

Taking into consideration the possible types of cubic graphs, we prove the following theorems.
\begin{thm}\label{thm1}
Let $G$ be a cubic graph on $n\geq 6$ vertices containing a diamond-factor. Then, the graph $G$ is $\cD$. 
\end{thm}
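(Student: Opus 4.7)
The plan is to exhibit an explicit pairing strategy for Dominator that works regardless of who moves first. Since $G$ contains a diamond-factor, $V(G)$ is partitioned into $n/4$ vertex-disjoint diamonds $D_1, \ldots, D_{n/4}$. In each $D_i$, I label the vertices $a_i, b_i, c_i, d_i$ so that $a_i$ and $b_i$ are the two ``apex'' vertices of degree three inside the diamond, while $c_i$ and $d_i$ are the two vertices of degree two inside it (so $a_ib_i, a_ic_i, a_id_i, b_ic_i, b_id_i \in E(G)$ while $c_id_i \notin E(G)$). Because $G$ is cubic, $a_i$ and $b_i$ have no neighbour outside $D_i$, whereas $c_i$ and $d_i$ each have exactly one outside neighbour.

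Now I would declare, in each diamond $D_i$, the two pairs $\{a_i, b_i\}$ and $\{c_i, d_i\}$. Together, these $n/2$ pairs partition $V(G)$. Dominator follows the standard Breaker pairing strategy along this partition: whenever Staller claims a vertex whose partner is still free, he responds by claiming that partner; otherwise (or if Dominator is to move first), he plays an arbitrary unclaimed vertex. The usual invariant argument then shows that Staller never manages to claim both vertices of any pair, so at the end of the game Dominator owns at least one element of $\{a_i, b_i\}$ and at least one element of $\{c_i, d_i\}$ for every $i$.

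It remains to verify that Dominator's set is a total dominating set. For $v = a_i$, the neighbourhood $N(v) = \{b_i, c_i, d_i\}$ meets Dominator's set since $\{c_i, d_i\}$ does; the case $v = b_i$ is symmetric. For $v = c_i$, we have $N(v) = \{a_i, b_i, x\}$, where $x$ is the unique outside neighbour of $c_i$, and $\{a_i, b_i\}$ meets Dominator's set; the case $v = d_i$ is symmetric. Since the diamond-factor covers $V(G)$, every vertex of $G$ has a neighbour that is Dominator's, which is exactly what we need.

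The main conceptual step is recognising the right pairing, and the only place cubic-ness is used in an essential way is in pinning down the shape of a diamond inside $G$, forcing the two apex vertices to have no outside neighbours. Once this local picture is in hand, the per-diamond pairs $\{a_i, b_i\}$ and $\{c_i, d_i\}$ already pierce every open neighbourhood of every vertex of the diamond, so the local pairing automatically yields a global winning strategy and I do not anticipate any further obstacle.
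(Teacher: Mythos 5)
Your proof is correct and matches the paper's approach: the paper also partitions $V(G)$ into the diamonds of the factor and wins locally in each, invoking that Dominator wins on the spanning $C_4$ of a diamond (Propositions~\ref{prop} and~\ref{cor2}), and that $C_4$ win is exactly the opposite-vertex pairing $\{a_i,b_i\}$, $\{c_i,d_i\}$ you spell out. Your version simply makes the underlying pairing strategy explicit rather than citing the two propositions.
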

\begin{thm}\label{thm2}
Let $G$ be a cubic graph on $n\geq 6$ vertices that contains a triangle-factor. If $n=6$, the graph $G$ is $\cD$. Otherwise, the graph $G$ is $\cS$.  
\end{thm}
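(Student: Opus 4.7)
For $n=6$ the unique cubic graph with a triangle factor is the triangular prism with vertex partition $T_1=\{a_1,a_2,a_3\}$, $T_2=\{b_1,b_2,b_3\}$ and external matching $\{a_ib_i\}$. Dominator wins via the pairing $P_i=\{a_i,b_{i+1}\}$ (indices mod $3$): a direct calculation shows that each $N(v)$ contains both vertices of some $P_i$, so the rule ``answer every Staller move $x\in P_i$ by the partner of $x$'' (and, if Dominator opens, play any vertex and then follow this rule) guarantees that Dominator owns at least one element of every open neighbourhood and hence blocks every winning set.

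For $n\geq 12$ the winning strategy belongs to Staller, and its core is the following \emph{double-threat lemma}: if at some point Staller owns two vertices $a,b$ of a triangle $T=\{a,b,c\}$ and Dominator owns none of $\{c,a',b',c'\}$, where $a',b',c'$ are the external neighbours of $a,b,c$, then Staller wins. Indeed $N(c)=\{a,b,c'\}$ is a $1$-threat which Dominator can block only by claiming $c'$; once he does, Staller plays $c$ and both $N(a)=\{b,c,a'\}$ and $N(b)=\{a,c,b'\}$ become simultaneous $1$-threats on the distinct vertices $a'$ and $b'$, giving Staller a forced win on the next move.

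Staller's strategy is to open with an arbitrary vertex $a$ of a triangle $T_1=\{a,b,c\}$ and then to force the hypothesis of the lemma in some triangle of the factor. If Dominator's reply $d_1$ lies outside $T_1\cup\{a',b',c'\}$ she plays $b$ and wins immediately by the lemma. If $d_1\in T_1\setminus\{a\}$, say $d_1=b$, she answers with the external neighbour $a'$; writing $T_{a'}=\{a',p,q\}$, the resulting position contains the three $2$-threats $N(a'),N(p),N(q)$, and unless Dominator's next move lies in $\{p,q\}$ Staller plays $p$ on move $3$ and wins via the double $1$-threat $N(a'),N(q)$ sharing the vertex $p$. The remaining openings --- $d_1\in\{a',b',c'\}$ or $d_1\in T_{a'}\cup T_{b'}\cup T_{c'}$ --- are treated by symmetric case analysis, with Staller always answering by a carefully chosen external neighbour of $T_1$. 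The \textbf{main obstacle} is the residual sub-case in which Dominator pre-empts the critical vertex in $T_{a'}$ (or its analogue), forcing Staller into a chain $T_1\to T_{a'}\to T_{p'}\to\cdots$ of $1$-threats and forced blocks; since $G$ is finite and $k=n/3\geq 4$, the chain must either revisit a previously entered triangle --- at which point an already-Staller-owned external vertex supplies the missing second $1$-threat --- or reach a fresh triangle whose three external neighbours are all still unclaimed by Dominator, where the double-threat lemma finally applies. For $n=6$ this escape route is absent, which is precisely why the pairing strategy above is able to cover every $N(v)$ simultaneously.
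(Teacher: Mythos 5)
Your $n=6$ argument is correct and in fact cleaner than the paper's: the prism is indeed the only cubic graph on six vertices with a triangle-factor, and the pairing $P_i=\{a_i,b_{i+1}\}$ really is contained in every open neighbourhood, so Dominator wins by a pairing strategy regardless of who starts (the paper instead gives an explicit move-by-move strategy). Your double-threat lemma is also sound: with $a,b\in\mathfrak{S}$ and $c,a',b',c'$ untouched by Dominator, his only defence of $c$ is $c'$, after which $c$ gives Staller the two distinct threats $a'$ and $b'$ (distinct because a common external neighbour of two vertices of a factor triangle would have degree at least $4$). This is essentially the paper's ``double trap'' specialised to triangle-factors.

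The problem is the $n\geq 12$ case. First, your strategy has Staller \emph{opening} the game, so at best it shows Staller wins the $S$-game; but the class $\mathcal{S}$ requires Staller to win no matter who starts, and winning as first player does not imply winning as second. The paper's entire case analysis is for the $D$-game, where Dominator's first move $a_1$ and his replies inside or near Staller's target triangle are exactly what makes the proof long (via the subgraphs $G_1$--$G_4$ and the trap lemmas); none of that is addressed in your proposal. Second, even within the $S$-game the argument is only a sketch at its critical point: the assertion that the forced chain $T_1\to T_{a'}\to T_{p'}\to\cdots$ must either revisit a triangle ``at which point an already-owned vertex supplies the missing threat'' or reach a triangle with all three external neighbours Dominator-free is stated, not proved, and the explicit case you do work out is already incomplete --- after $s_1=a$, $d_1=b$, $s_2=a'$, Dominator need not answer in $\{p,q\}$: he can play the external neighbour $q'$, which kills one of the two $1$-threats you rely on after $s_3=p$ (Staller can recover by playing $q$ instead, but that choice, and the general bookkeeping of which blocking vertices Dominator has accumulated along the chain, is precisely what a complete proof must control). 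So as written the $n>6$ half is a genuine gap, both in scope (wrong game) and in the unproved termination argument.
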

\begin{thm}\label{thm3}
Let $G$ be a cubic graph on $n\geq 6$ vertices which contains vertex disjoint copies of triangles and diamonds. Then, there are only two types of such a graph on which Dominator wins, but only as the first player. In all other cases the graph $G$ is $\cS$. 
\end{thm}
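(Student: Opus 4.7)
The plan rests on a single structural observation inherited from Theorem~\ref{thm1}: in any diamond $\{p,q,r,s\}$ with middle (degree-$3$) vertices $q,r$, claiming $q$ and $r$ simultaneously blocks all four open neighbourhoods $N(p),N(q),N(r),N(s)$ and, moreover, totally dominates the diamond together with its two external neighbours. Diamonds therefore act as Dominator's ``safe zones,'' and Staller's only hope of winning is to collect the open neighbourhood of a vertex lying in some triangle component $T=\{a,b,c\}$, i.e., a set of the form $\{b,c,a'\}$ where $a'$ is the unique outside neighbour of $a$ in $G$. Writing $n=3k_1+4k_2$ with $k_1,k_2\ge 1$, the whole problem reduces to classifying how the $3k_1+2k_2$ external half-edges of the factor are matched up.

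The first step is a finite case analysis on the smallest admissible orders $n=10,14,\dots$ to pinpoint the two exceptional graphs. Because of the safe-zone observation, these must be precisely the configurations in which every triangle $T$ is ``trapped'': each external neighbour $a',b',c'$ of $T$ either coincides with the middle of some diamond (so Dominator claims it as part of his first diamond move) or is so placed that Staller cannot simultaneously capture two vertices of $T$ and secure the remaining outside neighbour in time. For each exceptional graph, both parities must then be handled explicitly. As first player, Dominator opens by taking the two middles of every diamond and then one carefully chosen vertex of each triangle; a direct verification shows Staller cannot complete any $N(v)$. As second player, Dominator is shown to lose by exhibiting a concrete winning line for Staller: she opens with a triangle vertex whose outside neighbour is still free and finishes some $N(x)$ before Dominator can block.

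For every remaining graph the plan is to present Staller's winning strategy. Fix a triangle $T=\{a,b,c\}$ of the factor that is not trapped in the above sense (such a triangle exists precisely outside the two exceptional graphs). Since the three winning sets $N(a),N(b),N(c)$ pairwise share two vertices of $T$, it suffices for Staller to own two vertices of $T$ together with the outside neighbour of the third. Her rule is: at her first opportunity claim a vertex of $T$; if Dominator replies inside $T$, claim the last vertex of $T$ and then grab the missing outside neighbour; if Dominator replies outside $T$, claim a second vertex of $T$, thereby forcing Dominator back inside $T$ on his next move and reducing to the previous case. When Staller plays second, an auxiliary triangle of the factor (available outside the two exceptional cases) serves as the tempo move needed to set up this pattern.

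The main obstacle will be the finite enumeration isolating exactly the two exceptional graphs: the number of possible wirings between triangles and diamonds grows with $k_1$ and $k_2$, and one must rule out that a third exceptional graph hides at some larger order. A related delicate point is checking, in every non-exceptional case, that the outside neighbour that Staller needs is still free after Dominator's optimal opening; this is essentially the defining feature of a non-trapped triangle, and verifying it requires tracking how Dominator's diamond-middle moves interact with the external neighbours of the available triangles.
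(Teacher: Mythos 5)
There is a genuine gap, and it starts with your central structural claim. A free diamond is not a ``safe zone'' for Dominator --- it is Staller's main weapon in exactly this mixed triangle/diamond setting. Dominator gets one vertex per turn, so he cannot ``open by taking the two middles of every diamond''; and the pairing $(z_1,z_3),(z_2,z_4)$ on a diamond does not protect an outside vertex $v$ with $z_1\in N(v)$. Concretely, if Staller already holds one neighbour of $v$ and then claims $z_1$ on an all-free diamond, she simultaneously threatens to isolate $v$ and to play $z_3$ creating the double threat on $z_2$ and $z_4$; Dominator cannot answer both, and the completed winning set may be the open neighbourhood of a \emph{diamond} vertex. So your reduction ``Staller can only win through a triangle vertex'' is false, and with it the whole framing that the problem is a matching-up of triangle half-edges collapses; the paper's proof is driven precisely by these diamond traps and vertex-diamond traps, used as tempo-gaining forcing moves combined across several gadgets ($G_1$--$G_4$ type subgraphs), not by a purely triangle-local fight.

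Your Staller strategy for the ``non-exceptional'' case also fails against the obvious defence: if Staller takes $a\in T=\{a,b,c\}$ and Dominator answers with $b$, then $N(a)$ and $N(c)$ are already blocked, and the only surviving winning set at $T$ is $N(b)=\{a,c,b'\}$; when Staller takes $c$, Dominator takes $b'$ and the triangle is dead. A single free triangle is never enough --- Staller needs coordinated threats spanning at least two triangles joined by two edges, or a triangle plus adjacent diamond(s), which is exactly what the paper's lemmas establish before the long case analysis on how the pieces are wired. Finally, your plan to locate the two exceptional graphs by a finite search over the smallest orders cannot succeed: in the paper one exceptional ``type'' is an infinite family (two triangles, a diamond, and an arbitrarily long chain of diamonds), and the other has $18$ vertices, so exceptions are not confined to small $n$; moreover your definition of a ``trapped'' triangle (one where Staller ``cannot \dots secure the remaining outside neighbour in time'') is circular --- it restates the desired conclusion rather than giving a checkable structural condition --- and no verifiable Dominator strategy (first move plus pairings) is supplied for the exceptional graphs, nor a Staller strategy for the $S$-game on them, both of which the theorem requires.
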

\begin{defi}[\cite{W69}]
The generalized Petersen graph $GP(n,k)$, for given $n\geq 3$ and $1\leq k\leq n-1$, is a graph whose vertex set is $V(GP(n,k))= \{u_0, u_1, \dots, u_{n-1}, v_0, v_1, \dots, v_{n-1}\}$ and $E(GP(n,k))$ consists of all edges in form $$ u_iu_{i+1}, \,\,\,\,\,\, u_iv_i, \,\,\,\,\,\, v_iv_{i+k},$$
where $i$ is an integer. All subscripts are to be read modulo $n$. 
\end{defi}
Generalized Petersen graphs drew lots of attention since their definition. \cite{GHIK19} showed that the prism $P_2 \square C_n$, for $n\geq 3$ is $\cD$. 
This graph is isomorphic to the generalized Petersen graph $GP(n,1)$. For the graph $GP(5,2)$ it is proven by~\cite{GHIK19} that it is $\mathcal{S}$. The following theorem gives the characterization of $GP(n,2)$ for every $n\geq 6$. 
\begin{thm}\label{thm4}
The generalized Petersen graph $GP(n,2)$, where $n\geq 6$, is $\mathcal{S}$. 
\end{thm}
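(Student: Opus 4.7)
The plan is to produce an explicit winning strategy for Staller when she plays as the second player; by the monotonicity of Maker--Breaker games recalled in the introduction, this suffices to show $GP(n,2)\in\cS$. The first step is to use the automorphisms of $GP(n,2)$, namely the rotation $u_i\mapsto u_{i+1}$, $v_i\mapsto v_{i+1}$ together with the reflection $u_i\mapsto u_{-i}$, $v_i\mapsto v_{-i}$, to reduce Dominator's opening move to one of two representatives: either $u_0$ or $v_0$. This yields two principal cases.

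The core of the strategy is for Staller to aim for a \emph{double threat}: a position with two vertices $x\neq y$ such that $|N(x)\cap S|=|N(y)\cap S|=2$, $N(x)\cap D=N(y)\cap D=\emptyset$, and the unique unclaimed vertex of $N(x)$ is different from that of $N(y)$. Dominator's next move then blocks at most one of the two, and Staller completes the other on her following turn. To engineer such a position, Staller plays along the inner cycle $v_iv_{i+2}$: whenever she holds two inner vertices of the form $v_j$ and $v_{j+4}$ without Dominator having taken $u_{j+2}$, the set $N(v_{j+2})=\{u_{j+2},v_j,v_{j+4}\}$ is 1-away and forces Dominator to take $u_{j+2}$. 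By repeating this with shifted indices, and mixing in a well-chosen outer vertex at the right moment so that a threat of the form $N(u_i)=\{u_{i-1},u_{i+1},v_i\}$ becomes 1-away simultaneously with a threat along the inner cycle, Staller reaches a position with two simultaneous 1-away threats whose unique blockers are distinct.

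The main obstacle is the completeness of the case analysis on Dominator's replies, because he need not obediently block the current 1-away threat and may instead preempt Staller's intended next move or invest in building his own total dominating set. One also has to separate the parity of $n$, which determines whether the inner graph is one $n$-cycle or two $(n/2)$-cycles and so affects which ``shifted indices'' remain available; and to check small orders such as $n=6$ by hand, since there the modular identifications among the $v_i$ (for instance $v_{n-2}=v_4$) change the local combinatorics enough that the generic argument must be supplemented with a direct verification.
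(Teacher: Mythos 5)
Your overall framing is sound and matches the paper's: it is enough to give Staller a winning strategy as second player, the first move of Dominator reduces by the dihedral symmetry to an outer or an inner vertex, and the winning mechanism is exactly the paper's ``double trap'' (two simultaneously 1-away open neighbourhoods with distinct blockers), reached by forcing moves along the inner cycle $v_jv_{j+2}$. However, what you have written is a plan, not a proof, and the gap is precisely the part you yourself flag as ``the main obstacle.'' Nothing in the proposal establishes that Staller can actually reach a double threat against an adversarial Dominator: already your first forcing step (holding $v_j$ and $v_{j+4}$ with $u_{j+2}$ still free) is not guaranteed, since Dominator's reply to $s_1=v_j$ may be $v_{j+4}$ or a preemptive $u_{j+2}$, and the phrase ``repeating this with shifted indices, and mixing in a well-chosen outer vertex at the right moment'' is exactly the content that must be exhibited and verified move by move. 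Likewise the parity issue (one inner $n$-cycle versus two $(n/2)$-cycles) and the small orders $n=6,7,8$ are acknowledged but not handled. A referee cannot check a strategy that is not written down, so as it stands the argument is incomplete.

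For comparison, the paper closes this gap in two ways that your sketch lacks. For $n=6,7,8$ it gives explicit case analyses: after the (symmetry-reduced) first move of Dominator and Staller's fixed reply, every class of Dominator's second move is answered by a concrete forcing sequence ending in a double trap, so no Dominator deviation (including non-blocking, ``preemptive'' moves) is left unexamined. For $n\geq 9$ it avoids an $n$-dependent induction altogether: Staller selects a fixed $15$-vertex subgraph $\tau$ of $GP(n,2)$ avoiding $d_1$ (a stretch of consecutive outer vertices together with the relevant inner vertices) and plays entirely inside $\tau$; Dominator's moves outside $\tau$ are wasted, and only a short, $n$-independent case analysis of his moves inside $\tau$ is needed. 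If you want to salvage your proposal, the most economical route is to adopt this ``local subgraph'' device for large $n$ (it eliminates both the shifting-indices bookkeeping and the parity distinction) and then to supply the finite explicit analyses for $n=6,7,8$, which genuinely differ because of the modular identifications you mention.
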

Finally, we are interested in MBTD games on cubic bipartite graphs and the union of bipartite graphs and prove the following. 
\begin{thm}\label{thm5}
A cubic bipartite graph is $\mathcal{D}$.
\end{thm}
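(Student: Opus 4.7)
The plan is to give Dominator an explicit adaptive strategy producing a total dominating set for any play by Staller. Since Maker--Breaker games satisfy that an extra move never hurts Breaker, it suffices to show that Dominator wins when Staller plays first. Let $(A,B)$ be the bipartition of $G$, so $|A|=|B|=n/2$, and let $S$ and $D$ denote the current Staller- and Dominator-sets, respectively. Call a vertex $u\in V(G)$ \emph{critical} if $|N(u)\cap S|=2$ and $N(u)\cap D=\emptyset$, so that Staller needs only one more move on $N(u)$ to win, and \emph{semi-critical} if $|N(u)\cap S|=1$ and $N(u)\cap D=\emptyset$.

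My proposed strategy is the following. After Staller's move $v$, if some vertex $u$ is critical, Dominator claims the unique unclaimed vertex of $N(u)$, thereby defusing the threat and ensuring $N(u)\cap D\neq\emptyset$ for the rest of the game; otherwise Dominator claims a neighbor of $v$ chosen to minimize the number of newly semi-critical vertices in the resulting position. In both cases Dominator's move lies in $N(v)$, so $v$ itself becomes dominated. To justify the defusing step I first observe that $|N(u)\cap S|=2$ and $N(u)\cap D=\emptyset$ together imply that exactly one vertex of $N(u)$ is unclaimed, so this move is always legal and immediately permanently dominates $u$.

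The main step of the proof is to verify that Dominator's strategy never leaves two or more critical vertices with distinct defusing vertices after a single Staller move, so that his next response is always sufficient. When Staller's move $v$ creates several critical vertices $u_1,\ldots,u_k$ at once, each $u_i$ satisfies $v\in N(u_i)$ and has exactly one other Staller-claimed neighbor. I would argue that either all such $u_i$ share a common defusing vertex (so Dominator defuses them simultaneously with a single move, as happens in $K_{3,3}$ where a single remaining vertex of $A$ defuses all three critical vertices in $B$ at once), or Dominator's prior responses have already placed a $D$-vertex in $N(u_i)$ for all but at most one of them, so at most one truly new critical vertex exists. Establishing this claim inductively, using the bipartite cubic structure together with the decomposition of $G$ into three perfect matchings $M_1,M_2,M_3$ guaranteed by K\"onig's edge-coloring theorem, is the technical heart of the proof.

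The main obstacle is the case analysis needed to maintain the invariant that at most one unresolved critical vertex remains after each of Dominator's moves. The argument splits naturally according to how much neighborhood overlap $G$ has. Twin-like configurations, where several vertices on one side share most of their neighborhood on the other side, automatically yield shared defusing vertices and so are harmless; scarce-overlap configurations (think of the Heawood graph) instead limit how many distinct critical vertices Staller can create from a single move, so Dominator's minimization rule in the non-critical case keeps the semi-critical vertices few and spread out enough to avoid simultaneous double threats. Combining these regimes through the bipartite structure, and bounding the interaction between threats on the two sides of the bipartition (which are essentially independent because neighborhoods are contained within one side), should yield the theorem.
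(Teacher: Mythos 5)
There is a genuine gap, and it sits exactly where you place it yourself: the claim that Staller can never, in a single move, create two critical vertices requiring distinct defusing vertices is the entire content of the theorem, and your proposal does not prove it. The appeal to K\"onig's edge-colouring (the three perfect matchings $M_1,M_2,M_3$) is announced but never used in any concrete step; the division into ``twin-like'' and ``scarce-overlap'' regimes is a heuristic description, not a case analysis with a verifiable invariant; and the final ``should yield the theorem'' marks precisely the missing argument. Note that in cubic graphs in general Staller wins exactly by manufacturing such double traps (this is how the paper's own Staller-win results for triangle/diamond cubic graphs proceed), so bipartiteness must be exploited through some explicitly verified mechanism. Your strategy also contains an unanalysed rule (``claim a neighbor of $v$ minimizing the number of newly semi-critical vertices'') for which no property is ever established, so there is no induction hypothesis one could even try to propagate.

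There is also a concrete slip that breaks your bookkeeping: if Staller's move $v$ makes $u$ critical, then $N(u)$ lies entirely in the part opposite to $u$, which is the part containing $v$; the unique unclaimed vertex of $N(u)$ that Dominator must take is therefore on the \emph{same} side as $v$ and is not a neighbour of $v$. So the statement ``in both cases Dominator's move lies in $N(v)$'' is false in the defusing case, and the implicit claim that Staller's last move is always immediately answered inside its own neighbourhood does not hold. For comparison, the paper's proof does not work with abstract threat bookkeeping at all: it fixes an explicit cyclic adjacency ($u_i$ joined to $v_i$, $v_{i+1}$, $v_{i+2}$ modulo $n/2$), has Dominator answer via the second common neighbour of an index-consecutive pair with one vertex in each player's set, and derives a contradiction from the assumption that some vertex gets isolated. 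Your plan, if the central invariant were actually established, would be more general than that argument, but as written it is an outline whose key step is asserted rather than proved.
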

A \textit{claw} is the complete bipartite graph $K_{1,3}$.
\begin{thm}\label{thm6}
Let $G$ be a connected cubic graph on $n\geq 6$ vertices containing the claw-factor, and let $k\geq 2$ denote the number of vertex disjoint claws in $G$. For $k = 2$, $G$ is $\mathcal{D}$. For $k\geq 3$, the graph $G$ is $\mathcal{S}$. 
\end{thm}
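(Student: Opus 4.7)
The argument splits into the cases $k=2$ and $k\geq 3$.

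For $k=2$, I would first show that, up to isomorphism, there is a unique connected cubic graph on $8$ vertices admitting a claw-factor. Since each claw is induced, the three leaves of a claw are pairwise non-adjacent in $G$, so every leaf--leaf edge of $G$ must run between the two claws. The resulting bipartite leaf-graph on $3+3$ vertices is $2$-regular and must therefore be $C_6$. This forces $G \cong Q_3 \cong P_2\square C_4$, and the conclusion follows from the result of~\cite{GHIK19} that the prism $P_2\square C_n$ is in $\cD$ for every $n\geq 3$.

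For $k\geq 3$, Staller's winning sets on the $n=4k$ vertices are of two types: the $k$ pairwise disjoint leaf-triples $L_i = N(c_i) = \{\ell_{i,1},\ell_{i,2},\ell_{i,3}\}$, one per claw $C_i$, and the $3k$ open neighbourhoods $N(\ell) = \{c,a,b\}$, where $c$ is the centre of the claw containing $\ell$ and $a,b$ are the two outside neighbours of $\ell$. Every vertex lies in exactly three such winning sets. Staller's plan would be to open by claiming a claw centre $c_1$, which simultaneously places her one element into each of $N(\ell_{1,1})$, $N(\ell_{1,2})$, $N(\ell_{1,3})$; on subsequent moves she either presses one of these three threats by claiming an outside neighbour of the corresponding leaf, or shifts to complete some $L_j$ in a claw where Dominator has not yet placed a blocker. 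The strategy would be guided by the invariant that, after each round, at least two of the $k\geq 3$ claws remain ``live'', meaning that Staller can still complete either the $L_i$ of that claw or some $N(\ell)$ whose support uses its leaves. A symmetric variant, opening with a leaf instead of a centre, handles the case in which Staller moves second.

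The main obstacle is the case analysis for $k\geq 3$. The inter-claw adjacency pattern is not unique (for $k=3$ the nine leaves can carry any $2$-regular graph avoiding intra-claw edges, which has several non-isomorphic realisations), so Staller's strategy must be articulated in structural terms---tracking the number of live claws and of half-claimed $N(\ell)$-sets---rather than through explicit vertex labels. The plan is to verify that Dominator cannot eliminate the live-claw invariant in a single response, so Staller is forced to close a winning set before the board is exhausted.
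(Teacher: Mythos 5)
Your $k=2$ argument is essentially correct and takes a slightly different route from the paper: you identify the graph as $Q_3\cong P_2\square C_4$ and invoke the prism result of~\cite{GHIK19}, whereas the paper simply partitions the vertex set into two $4$-sets each inducing a $C_4$ and applies Proposition~\ref{prop} together with Proposition~\ref{cor2}. One caveat (shared, implicitly, with the paper): you assert that each claw is induced, but an $H$-factor as defined here only requires vertex-disjoint copies, and there is a connected cubic graph on $8$ vertices whose two claws have a pair of adjacent leaves, so the uniqueness claim $G\cong Q_3$ needs the induced hypothesis; that exceptional graph also splits into two $C_4$'s, so the conclusion survives, but the step as written is not justified.

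For $k\geq 3$, however, what you offer is a plan rather than a proof, and the missing piece is precisely what does the work in the paper. Your ``live claw'' invariant is never made precise, and even granted, keeping two claws live does not win the game: Dominator can parry each single threat as it arises, so Staller only wins when she reaches a position with two simultaneous threats --- a double trap in the paper's terminology. The paper's proof consists entirely of explicit forced sequences producing such traps: for $k=3$, a case analysis on Dominator's first move (Staller answering with a claw centre) that ends in a double trap such as $x_2$--$y_3$; for $k\geq 4$, the observation that after Dominator's single move three consecutive claws are entirely free, on which Staller opens with the middle centre $t_3$ and again forces a double trap. Your sketch exhibits no forcing sequence and no two-threat position, does not check that Dominator's replies cannot defuse the pressure (each of your ``presses'' is a lone threat he can answer), and the case that actually matters for membership in $\cS$ --- Staller playing second in the $D$-game --- is dispatched with an unverified ``symmetric variant,'' whereas the paper has Staller play a centre in that case too and verifies the continuation. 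You are right that the inter-claw adjacency pattern is not unique (a point the paper itself glosses over by fixing explicit edge sets), but announcing that the strategy ``must be articulated in structural terms'' without supplying and verifying that structural strategy leaves the $k\geq 3$ half of the theorem unproved.
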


\subsection{Preliminaries}
For given graph $G$ by $V(G)$ and $E(G)$ we denote its vertex set and edge set, respectively. The order of graph $G$ is denoted by $v(G) = |V(G)|$, and the size of the graph by $e(G) = |E(G)|$. 

Assume that the MBTD game is in progress.
We denote by  $d_1, d_2,...$ the sequence of vertices chosen by
Dominator and by $s_1, s_2,...$ the sequence of vertices chosen by Staller. 
At any given moment during this game, we denote the set of vertices claimed by Dominator by $\mathfrak{D}$ and the set of vertices claimed by Staller by $\mathfrak{S}$. As in the paper of~\cite{GIK19}, we say that the game is the $D$-game if Dominator is the first to play, i.e. one \textit{round} consists of a move by Dominator followed by a move of Staller.
In the $S$-game, one round consists of a move by Staller followed by a move of Dominator. 
The vertices in $V(G) \setminus(\mathfrak{D}\cup \mathfrak{S})$ are called \emph{free}. 
We say that the vertex $v$ is \textit{isolated} by Staller if  all neighbours of $v$ are claimed by Staller. 
The open neighbourhood of a vertex $v$, denoted by $N_G(v)$, is the set of vertices adjacent to $v$ in $G$. Graph $G$ is $r$-regular if every vertex $v\in V(G)$ has degree $r$. A $3$-regular graph is called \textit{cubic graph}.   
\begin{defi}
The Cartesian product $G \square H$ of graphs $G$ and $H$ is the graph with
vertex set $V (G \square H)$ = $V(G)\times V(H)$ in which $(u,v)$ is adjacent to $(u',v')$ if either $u = u'$ and $vv' \in E(H)$, or $v=v'$ and $uu'\in E(G)$. \\ The circular ladder graph (or prism graph) $CL_n$ is the Cartesian product of a cycle of length $n \geq 3$ and an edge, that is, $CL_n = C_n \square P_2$. 
\end{defi}
\noindent A $n$-prism graph is isomorphic to the generalized Petersen graph $GP(n,1)$. \\ \\
We point out some basic properties of the MBTD games given by~\cite{GHIK19}. 
\begin{proposition}\label{cor2}(\cite{GHIK19}, Corollary 2.2(ii))
Let $G$ be a graph. 
Let $V_1, ..., V_k$ a partition of $V(G)$ such that $V_i$,
$i \in [k] := \{1,...,k\}$, induces a graph on which Dominator wins the MBTD game, then Dominator wins MBTD game on $G$. 
\end{proposition}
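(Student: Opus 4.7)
The plan is to have Dominator simulate a parallel MBTD game on each $G[V_i]$, using in each a winning strategy that exists by hypothesis, and then show that the union of his final claimed sets is a total dominating set of $G$. For each $i\in[k]$ fix winning strategies $\tau_i^D$ and $\tau_i^S$ for Dominator as first and second player on $G[V_i]$; both exist because $G[V_i]\in\mathcal{D}$.

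Dominator's strategy will be a \emph{matching} one: whenever Staller plays $s\in V_i$, Dominator replies in the same $V_i$ following $\tau_i^D$ (if Dominator was the first to play in $V_i$) or $\tau_i^S$ (if Staller was). In the overall $D$-game, Dominator opens with the first move of $\tau_{i_0}^D$ for a freely chosen $V_{i_0}$. If Staller's move exhausts its part $V_i$ so that no matching response in $V_i$ is available, Dominator opens some unstarted $V_j$ with $\tau_j^D$, or, failing that, plays any free vertex.

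The core verification uses, for each $i$, a hypothetical sub-game $H_i$ on $G[V_i]$ in which Staller plays her main-game moves in $V_i$ in their original order and then continues arbitrarily until $V_i$ is exhausted, while Dominator responds throughout by the appropriate $\tau_i$. Since the set of vertices of $V_i$ claimed at any moment of $H_i$ is contained in the corresponding main-game state, each actual Staller move remains legal in $H_i$, so $H_i$ is a genuine play of the MBTD game on $G[V_i]$; the winning property of $\tau_i$ then guarantees that the set $D_i^\star$ of Dominator's hypothetical moves is a total dominating set of $G[V_i]$, and by construction $D_i^\star$ is disjoint from all Staller hypothetical moves and hence from $\mathfrak{S}\cap V_i$. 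Because every vertex of $V_i$ is eventually claimed in the main game, $\mathfrak{D}\cap V_i=V_i\setminus(\mathfrak{S}\cap V_i)\supseteq D_i^\star$, and since the property of being a total dominating set is closed under taking supersets, $\mathfrak{D}\cap V_i$ is itself a total dominating set of $G[V_i]$. Any $v\in V(G)$ then lies in some $V_i$ and has a neighbour in $\mathfrak{D}\cap V_i\subseteq\mathfrak{D}$ within $G[V_i]$, which is also a neighbour in $G$, so $\mathfrak{D}$ is a total dominating set of $G$ and Dominator wins, independently of who moved first.

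The main obstacle I expect is keeping the parallel simulations consistent when Dominator is forced to deviate from the prescribed $\tau_i$ move, for instance because the intended vertex has already been claimed by a prior spill-over. Such a deviation does not invalidate $H_i$: the vertex that $\tau_i$ would have picked is present in the main-game state and therefore cannot subsequently be claimed by Staller, so every main-game Staller move in $V_i$ is still legal in $H_i$, the inclusion of hypothetical Dominator claims into main-game Dominator claims is preserved, and the monotonicity of total dominating sets finishes the proof.
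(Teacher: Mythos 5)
Your proposal is essentially correct, but note that the paper itself contains no proof of this statement: Proposition~\ref{cor2} is imported verbatim from \cite{GHIK19} (their Corollary~2.2(ii)), so there is no in-paper argument to compare against. What you give is the standard self-contained ``strategy combination'' proof: play an imagined MBTD game on each $G[V_i]$, answer every Staller move inside the part where it was made using a winning strategy for that part, treat any forced spill-over moves as bonus vertices for Dominator, and use the facts that a superset of a total dominating set is again a total dominating set and that part-wise total domination of the $G[V_i]$ yields total domination of $G$. The delicate point is exactly the one you flag: when a prescribed response is already in Dominator's hands because of an earlier spill-over, the simulation of $H_i$ stays consistent since that vertex can no longer be taken by Staller, so Staller's real moves in $V_i$ remain legal in $H_i$ and $D_i^\star$ stays disjoint from $\mathfrak{S}\cap V_i$; your handling of this is sound. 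One minor imprecision worth tightening: if Dominator's first vertex in some part $V_i$ is an arbitrary spill-over rather than the opening move of $\tau_i^D$, then ``follow $\tau_i^D$'' is not literally defined; the clean fix, which your hypothetical-game bookkeeping already accommodates, is to run $H_i$ with Dominator as second player (strategy $\tau_i^S$, which exists since $G[V_i]$ is $\mathcal{D}$ regardless of who starts) and regard the spill-over vertex as a bonus. With that small clarification the argument is complete and, as far as one can tell, follows the same general idea used in \cite{GHIK19}.
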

\begin{proposition}\label{prop}(\cite{GHIK19}, Proposition 2.4)
Dominator wins in MBTD game on cycle $C_4$. 
\end{proposition}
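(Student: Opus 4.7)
The plan is to settle this by an explicit case analysis, since $C_4$ has only four vertices and the whole game is over after four moves. I would first list the relevant sets: if $V(C_4)=\{v_1,v_2,v_3,v_4\}$ with edges along the cycle, then the open neighbourhoods are $N(v_1)=N(v_3)=\{v_2,v_4\}$ and $N(v_2)=N(v_4)=\{v_1,v_3\}$, so Staller's winning sets are just the two ``diagonals'' $\{v_1,v_3\}$ and $\{v_2,v_4\}$. On the other hand, any pair of adjacent vertices is already a total dominating set of $C_4$ (each of the four vertices then has a neighbour in such a pair), so Dominator's winning sets include all four edges of the cycle.

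Next, I would invoke the standard Maker--Breaker observation recalled in the introduction: since the second player's strategy can always be adopted by the first player (in this context, applied to Dominator, who plays the role of Breaker), it suffices to exhibit a winning strategy for Dominator in the $S$-game. Assume therefore that Staller moves first; by the dihedral symmetry of $C_4$ I may take $s_1=v_1$. I would then instruct Dominator to play $d_1:=v_3$, which immediately destroys Staller's winning set $\{v_1,v_3\}$ and leaves $\{v_2,v_4\}$ as her only remaining target. On Staller's second move she must pick $v_2$ or $v_4$; by the remaining reflective symmetry assume $s_2=v_2$. Dominator then plays $d_2:=v_4$, which simultaneously blocks $\{v_2,v_4\}$ and yields $\mathfrak{D}=\{v_3,v_4\}$, an adjacent pair and hence a total dominating set of $C_4$. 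The case $s_2=v_4$ is handled symmetrically by $d_2:=v_2$.

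There is essentially no obstacle here; the only things to verify carefully are that every pair of adjacent vertices of $C_4$ is indeed a total dominating set (true because the two vertices of such a pair cover each other and the remaining two vertices each have a neighbour among them) and that the two diagonals really are the only open neighbourhoods. Both facts are immediate, so the case analysis above constitutes a complete proof.
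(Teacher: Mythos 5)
Your proof is correct: on $C_4$ the only open neighbourhoods are the two diagonals $\{v_1,v_3\}$ and $\{v_2,v_4\}$, any adjacent pair is a total dominating set, and your second-player strategy (answer Staller's first vertex with its antipode, then take the mate of her second vertex) both blocks the two diagonals and leaves Dominator with an adjacent pair, while the passage from the $S$-game to the $D$-game is exactly the standard Maker--Breaker monotonicity fact recalled in the paper's introduction. Note that the paper does not prove this proposition at all---it is imported from \cite{GHIK19} (Proposition 2.4)---so there is no in-paper argument to compare with; your explicit case analysis is a complete, self-contained justification of the cited statement.
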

\subsubsection{Traps}
Consider the MBTD game on a graph $G$. Let $v\in V(G)$ and let $u_1, u_2, u_3 \in N_G(v)$. Let $u_2$ and $u_3$ be free vertices. 
Let $u_1 \in \mathfrak{S}$ and suppose that it is Staller's turn to make her move. If Staller claims $u_2$ (or $u_3$), she creates a \textit{trap} for Dominator, that is, Staller forces Dominator to claim $u_3$ (or $u_2$) as otherwise she isolates $v$. 
\paragraph{Double trap.}
We say that Staller creates a \textit{double trap} $u-v$ in the MBTD game on $G$, where $u,v \in V(G)$ are free vertices, if after Staller's move Dominator is forced to claim both vertices $u$ and $v$. Since Dominator can not claim two vertices in one move, in her next move Staller will claim either $u$ or $v$ and isolate either a neighbour of $u$ or a neighbour of $v$. 
If Staller creates a double trap, Dominator loses the game. 

For an illustration, consider subgraph $H_1$ on Figure~\ref{trapsa}. Suppose that $v_2, v_3 \in \mathfrak{S}$ and $v_0 \in \mathfrak{D}$. If Staller claims a vertex $v_1$, she creates a double trap $u-v$. In her next move, Dominator is forced to claim vertex $u$ to prevent $v_2$ from being isolated by Staller in her next move, but also Dominator is forced to claim $v$ to prevent $v_3$ from being isolated by Staller in her next move. 
\paragraph{Diamond trap.} Suppose that the MBTD game on the connected cubic graph $G$ is in progress. 
Let $Z \subseteq G$ be a diamond with the vertex set $V(Z) = \{z_1, z_2, z_3, z_4\}$ and the edge set $E(Z)= \{z_1z_2, z_2z_3, z_3z_4, z_4z_1,  z_2z_4\}$ and suppose that all vertices from $V(Z)$ are free. 
By claiming $z_1$ (or $z_3$) Staller creates a \textit{diamond trap} on $Z$. That is, she forces Dominator to claim a vertex from $V(Z) \setminus \{z_1\}$ (or $V(Z) \setminus \{z_3\}$), as otherwise, in her next move Staller will claim $z_3$ (or $z_1$) and create a double trap $z_2-z_4$.
\sloppy  
\paragraph{Vertex-diamond trap.}
Suppose that the MBTD game on the connected cubic graph $G$ is in progress. Consider subgraph $G' \subseteq G$ with the vertex set $V(G') = \{v, y_1, y_2, z_1, z_2, z_3, z_4\}$, where the vertices $z_1, z_2, z_3$ and $z_4$ form a diamond $Z$ with the edge set $E(Z)=\{z_1z_2, z_2z_3, z_3z_4, z_4z_1, z_2z_4\}$. Let $E(G') = E(Z) \cup \{vy_1, vy_2, vz_1\}$. Let $y_2, z_1, z_2, z_3, z_4$ be free vertices. Suppose that $y_1 \in \mathfrak{S}$ and it is Staller's turn to make her move. 
If Staller claims $z_1$ she creates a \textit{vertex-diamond trap} $y_2-Z$. That is,  
she forces Dominator to claim $y_2$, as otherwise Staller can isolate $v$ in her next move. Also, Staller has created a diamond trap on $Z$ which forces Dominator to claim $V(Z) \setminus \{z_1\}$. In any case Dominator will lose the game. 

For an illustration, consider subgraph $H_2$ on Figure~\ref{trapsb}. Suppose that $y_1 \in \mathfrak{S}$. Once Staller claims vertex $z_1$, she forces Dominator to claim one of the vertices from $\{z_2, z_3, z_4\}$ as she created a diamond trap on $Z$, but also she forces Dominator to claim vertex $y_2$ to prevent $v$ from being isolated in Staller's next move. 
\begin{figure}[htbp]
  \begin{center}
    \subfigure[Subgraph $H_1$ \label{trapsa}]{\includegraphics[scale=0.2]{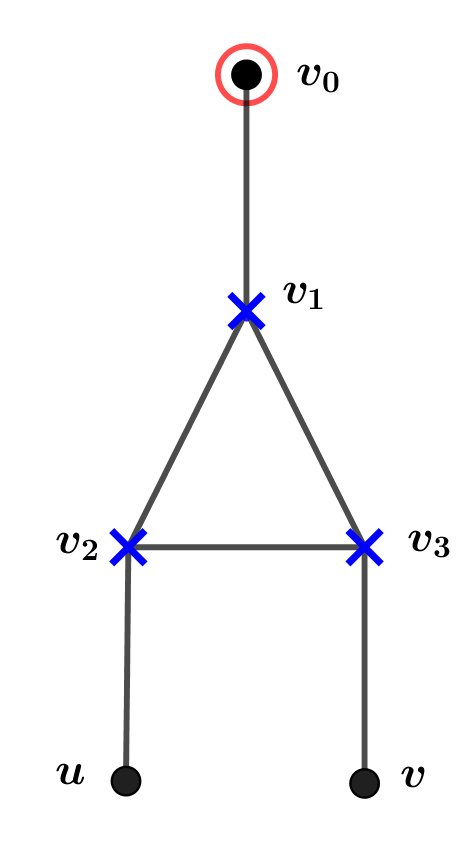}}
    \hfil
    \subfigure[Subgraph $H_2$ \label{trapsb}]{\includegraphics[scale=0.2]{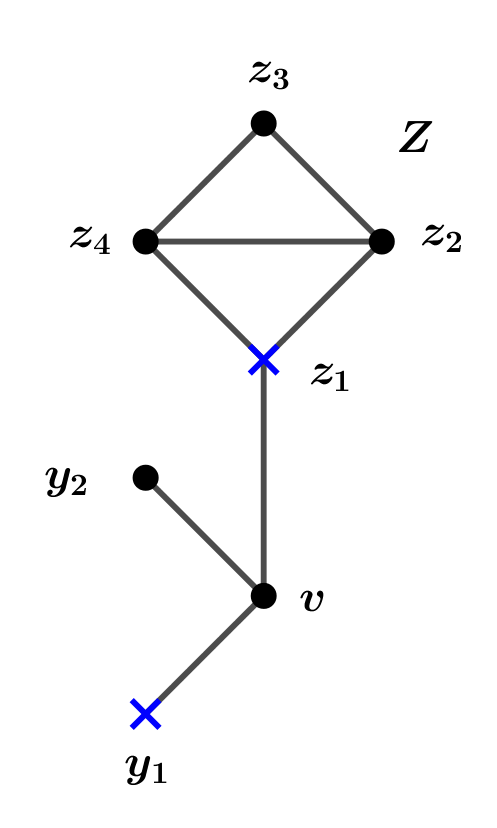}}
    \caption{Traps illustration. \\ 
   Staller's moves are denoted by blue crosses and Dominator's move $v_0$ is denoted by red circle.}
    \label{traps}
  \end{center}
\end{figure}

\subsubsection{Pairing strategy for Dominator.}

Pairing strategy is one of the most natural strategies for Breaker, as defined by~\cite{HKSSBook14}. 
In order to win in MBTD game played on certain graphs, Dominator will use the pairing strategy. This means that the subset of the board of the game can be partitioned into pairs such that every winning set (i.e.\ open neighbourhood of a vertex in the graph) contains one of the pairs. When Staller claims an element from some pair, Dominator will respond by claiming the other element from that pair. 

Pairing strategy has been already studied for Maker--Breaker domination games by~\cite{DGPR18}, where it is proven that if a graph $G=(V, E)$ admits a pairing dominating set, i.e.\ a set $\{(u_1, v_1), (u_2, v_2), \dots, (u_k, v_k)\}\subseteq V$, where all vertices are distinct and $V=\cup_{i=1}^{k}N[u_i]\cap N[v_i]$, then the graph is $\mathcal{D}$ (Proposition 9 by~\cite{DGPR18}). 

\section{Graphs from $\mathcal{D}$ and $\mathcal{S}$} 
\begin{proof}[of Theorem \ref{thm1}] 
The vertex set $V(G)$ can be partitioned into $4$-sets, each containing a $C_4$. So, by Proposition \ref{prop} and Proposition \ref{cor2} Dominator wins on $C_4$, and therefore on diamond. 
\end{proof}

\begin{defi}
\label{def1}
Suppose that the MBTD game on the connected cubic graph $G$ on $n\geq 6$ vertices is in progress.
\begin{enumerate}
\item[1.] By $G_1$ denote an induced subgraph of $G$ with the vertex set $V(G_1) = \{u_0,u_1,u_2,u_3, v_0,v_1,v_2,v_3 \}$, where the vertices $u_1,u_2,u_3$ form a triangle $U$ and the vertices $v_1,v_2,v_3$ form a triangle $V$.  Let $E(G_1) = E(U) \cup E(V) \cup  \{u_0u_1,v_0v_1, u_2v_2, u_3v_3\}$.
The subgraph is illustrated in Figure \ref{subgrapha}. 
\item[2.] By $G_2$ denote a subgraph of $G$ with the vertex set $\{x_1,x_2,x_3, u_1,u_2,u_3, v_1,v_2,v_3, z_1,z_2,z_3,v\}$, where the vertices 
$x_1,x_2,x_3$ from a triangle $X$, $u_1,u_2,u_3$ form a triangle $U$, $v_1,v_2,v_3$ form a triangle $V$, and $z_1,z_2,z_3$ form a triangle $Z$. 
Let $E(G_2) = E(X) \cup E(U) \cup E(V) \cup E(Z) \cup \{v_3v,u_1x_1, u_2v_2, u_3z_3\}$. 
It could be a case that the neighbour of vertex $v_3$ denoted by $v$ is one of the vertices from the set $\{x_2, x_3, z_1, z_2\}$. 
The subgraph is illustrated in Figure \ref{subgraphb}. 
\item[3.] By $G_3$ denote a subgraph of $G$ which contains a triangle $U$ with the vertex set $\{u_1,u_2,u_3\}$, and a diamond $Z$ with the vertex set $\{z_1,z_2,z_3,z_4\}$ and the edge set $E(Z)= \{z_1z_2,z_2z_3,z_3z_4, z_4z_1, z_2z_4\}$. Let $E(G_3) = E(U) \cup E(Z) \cup \{u_2z_1\}$. 
The subgraph is illustrated in Figure \ref{subgraphc}. 
\item[4.] By $G_4$ denote a subgraph of $G$ which contains a triangle $U$ with the vertex set $\{u_1,u_2,u_3\}$ and two diamonds, a diamond $Y$ with the vertex set $\{y_1,y_2,y_3,y_4\}$ and the edge set $E(Y)=\{y_1y_2,y_2y_3,y_3y_4,y_2y_4\}$, and a diamond $Z$ with the vertex set $\{z_1,z_2,z_3,z_4\}$ and the edge set $E(Z)=\{z_1z_2,z_2z_3,z_3z_4,z_4z_1,z_2z_4\}$. Let $E(G_4) = E(U) \cup E(Y) \cup E(Z) \cup \{u_2y_1, u_3z_1\}$. 
The subgraph is illustrated in Figure \ref{subgraphd}.
\end{enumerate}
 \begin{figure}[!h]
    \centering
    \subfigure[$G_1$ \label{subgrapha}]{\includegraphics[scale=0.7]{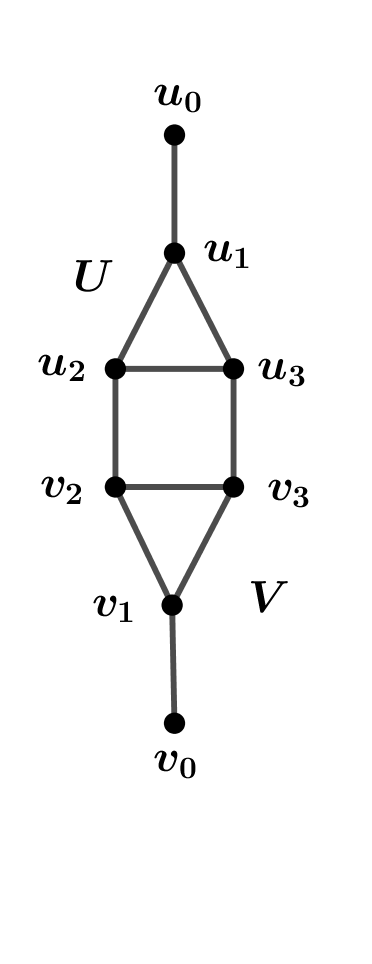}} 
    \hfill
    \subfigure[$G_2$ \label{subgraphb}]{\includegraphics[scale=0.22]{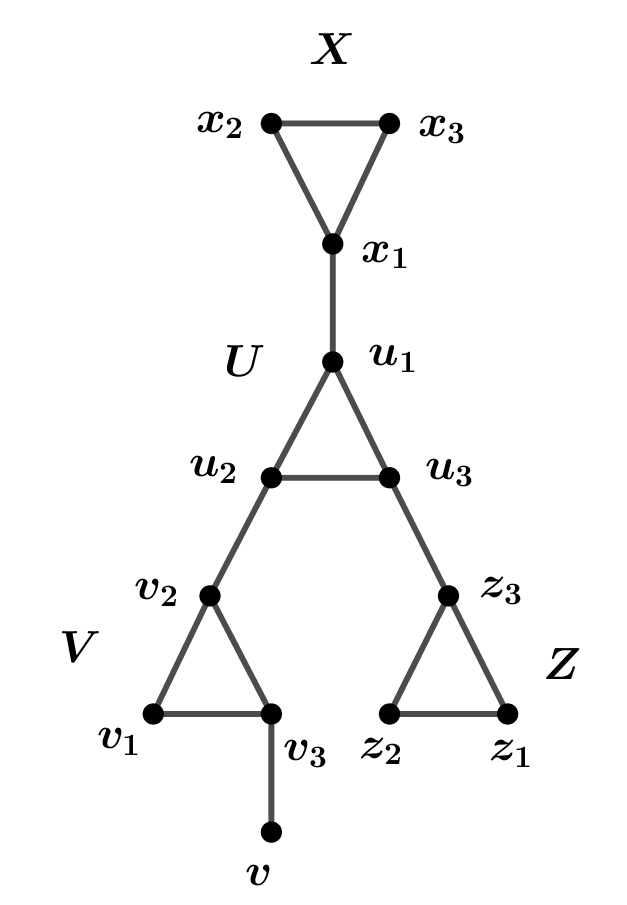}} 
    \hfill
    \subfigure[$G_3$ \label{subgraphc}]{\includegraphics[scale=0.7]{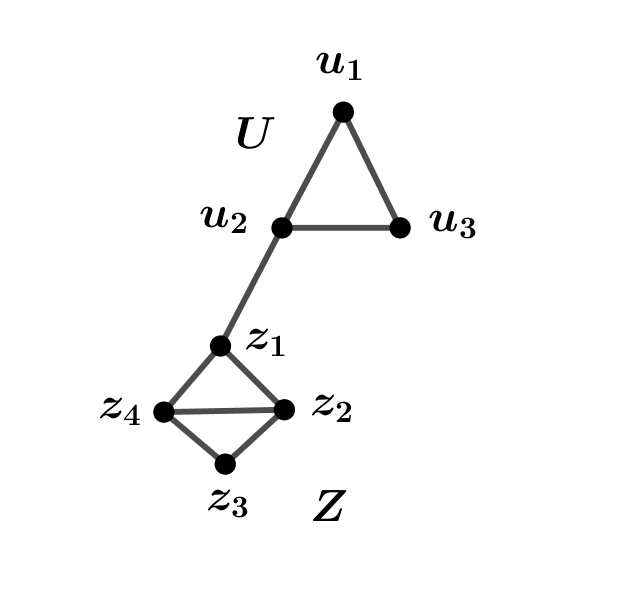}}
    \hfill
    \subfigure[$G_4$ \label{subgraphd}]{\includegraphics[scale=0.7]{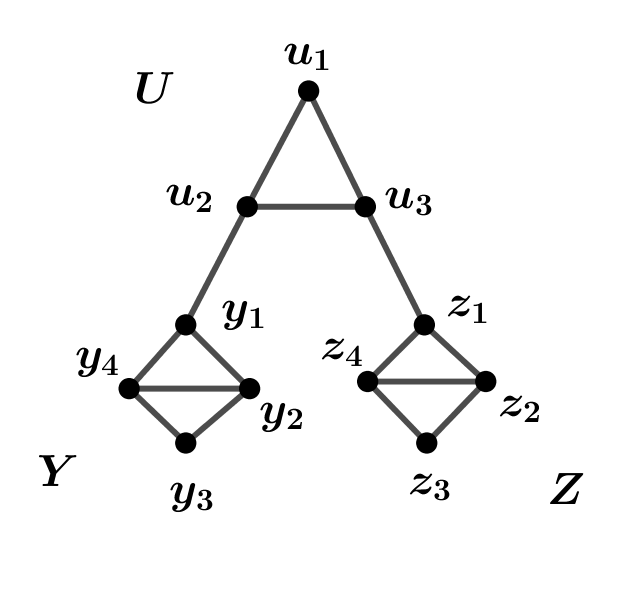}}
    \caption{Subgraphs illustration from Definition \ref{def1}.}
    \label{subgraph}
\end{figure}
\end{defi}
To prove Theorem \ref{thm2} and Theorem \ref{thm3} we will need the following lemmas. 
\begin{lem}\label{lema1}
Consider the $D$-game on $G_1$. 
If $d_1 = u_0$, then $G_1$ is $\mathcal{S}$. Also, Staller wins $S$-game on $G_1$. 
\end{lem}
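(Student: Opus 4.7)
The plan is to have Staller play $s_1 = v_2$ in both scenarios, exploiting the fact that $v_2$ lies in three open neighborhoods — namely $N(u_2)$, $N(v_1)$, and $N(v_3)$ — so that a single move advances three potential winning sets for Staller at once. In the $D$-game, the Dominator's move $d_1 = u_0$ lies only in $N(u_1)$, so it kills only that single set for Staller and leaves the three sets advanced by $v_2$ untouched. This means the $u_0$-move is effectively wasted for local defense, and the subsequent analysis is essentially the same as in the $S$-game.

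The proof would then proceed by case analysis on Dominator's next move $d$. The pivotal vertices are $u_3 \in N(u_2) \cap N(v_3)$ and $v_3 \in N(v_1) \cap N(u_3) \cap N(v_2)$. For $d \in \{u_0, u_2, v_0, v_3\}$ the free vertex $u_3$ still lies in two of Staller's three advanced sets, whose only remaining completions are $u_1$ and $v_1$, so $s_2 = u_3$ creates an immediate double trap that Dominator cannot block. For $d \in \{u_1, u_3, v_1\}$ I would have Staller play $s_2 = v_3$ instead; this completes $N(v_1)$ up to the single still-free vertex $v_0$, forcing $d_3 = v_0$, after which a secondary double trap becomes available via $s_3 = u_2$ (when $d = u_3$, on $N(u_3)$ and $N(v_2)$), via $s_3 = v_1$ (when $d = u_1$, on $N(v_2)$ and $N(v_3)$), or via $s_3 = u_1$ (when $d = v_1$, on $N(u_2)$ and $N(u_3)$). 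The $S$-game is covered by the same case analysis, since Dominator's first move $d_1$ there ranges over precisely the seven non-$v_2$ vertices of $V(G_1)$, each of which is one of the cases above.

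The main obstacle is the response $d = u_3$, which simultaneously kills $N(u_2)$ and $N(v_3)$ — the two sets that a direct double trap from $s_2 = u_3$ would exploit. The trick for handling this case is the pivot via $v_3$: it reduces $N(v_1)$ to a one-vertex threat and forces Dominator to commit to blocking $v_0$, and this in turn releases the indirect double trap through $u_2$. All other cases are routine once the pivot vertices $u_3$ and $v_3$ are identified, and the statement of the lemma follows.
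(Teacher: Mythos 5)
Your proposal is correct and follows essentially the same strategy as the paper: Staller opens with $v_2$, answers $d\in\{u_2,v_0,v_3\}$ (and $u_0$ in the $S$-game) with $u_3$ to set the double trap $u_1$--$v_1$, and answers $d\in\{u_1,u_3,v_1\}$ with $v_3$ forcing $v_0$, followed by the same third moves ($u_2$, $v_1$, $u_1$ respectively) as in the paper's Case 1. The only cosmetic difference is that you treat the $S$-game by adding $u_0$ as an explicit case instead of the paper's ``pretend Dominator played $u_0$'' argument.
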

\begin{proof}
We have $d_1 = u_0$. Then, $s_1 = v_2$. Consider the following cases: 
\begin{enumerate}[{Case }1.]
\item $d_2 \in \{u_1, u_3, v_1\}$. Then, $s_2 = v_3$ which forces $d_3 = v_0$. 
In her third move, if $d_2 = u_1$, Staller claims $v_1$ and creates a double trap $u_2-u_3$,
if $d_2 = u_3$, Staller claims $u_2$ and creates a double trap $u_1-v_1$, and if $d_2 = v_1$, Staller claims $u_1$ Staller and creates a double trap $u_3-u_2$. 
\item $d_2 \in \{u_2, v_3, v_0\}$. Then, by playing $s_2 = u_3$ Staller creates a double trap $u_1-v_1$. In her third move Staller isolates $u_2$ or $v_3$.
\end{enumerate}
In the $S$-game, Staller can pretend that she is the second player and $d_1=u_0$ and win the game.
\end{proof}

\begin{lem}\label{lema2}
Consider the MBTD game on $G_2$. Let $u_1\in \mathfrak{S}$ and suppose that at least the vertices $u_2, u_3, v_1, v_2, v_3, v, z_3$ are free. 
Suppose that it is Staller's turn to make her move. Then, Staller wins. 
\end{lem}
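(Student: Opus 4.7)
The plan is to exhibit an explicit three-move Staller strategy that first creates two single-threat forcing moves and then springs a double trap. The first move I would take is $s_1 = v_2$: together with the initial $u_1 \in \mathfrak{S}$, this places two Staller vertices in $N(u_2) = \{u_1, u_3, v_2\}$, and since the only remaining free vertex of $N(u_2)$ is $u_3$, Dominator would be forced to reply $d_1 = u_3$; otherwise Staller would claim $u_3$ on her next turn and isolate $u_2$.

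Next, Staller would play $s_2 = u_2$. Now the two Staller vertices $u_1, u_2$ lie in $N(u_3) = \{u_1, u_2, z_3\}$; since $z_3$ is free by hypothesis, Dominator is symmetrically forced to reply $d_2 = z_3$, or else lose $u_3$ on Staller's next move.

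The decisive third move would be $s_3 = v_1$. Because $v_1$ is a common neighbor of both $v_2$ and $v_3$ inside the triangle $V$, this single move produces two simultaneous threats: $N(v_2) = \{u_2, v_1, v_3\}$ contains $u_2, v_1 \in \mathfrak{S}$, so saving $v_2$ requires Dominator to claim $v_3$; and $N(v_3) = \{v_1, v_2, v\}$ contains $v_1, v_2 \in \mathfrak{S}$, so saving $v_3$ requires claiming $v$. Both $v_3$ and $v$ are in the free list given by the hypothesis and remain untouched after $d_1$ and $d_2$; hence this is a genuine double trap $v_3-v$ in the sense of the preliminaries, and Staller would win on $s_4$ by isolating whichever of $v_2, v_3$ Dominator fails to rescue.

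The main obstacle, as I see it, is really just to discover the triple $(v_2, u_2, v_1)$. Once this is identified, only two short verifications remain: (i) on turns one and two Dominator has no alternative saving move, which is immediate because $u_2$ and $u_3$ each have exactly one free neighbor at the relevant moment; and (ii) the threat vertices $v_3$ and $v$ are still free when $s_3$ is played, which follows from the hypothesis together with the forced $d_1 = u_3$ and $d_2 = z_3$. The argument is insensitive to the states of the auxiliary vertices $x_1, x_2, x_3, z_1, z_2$, so no further case analysis is required.
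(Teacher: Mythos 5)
Your strategy is correct and is essentially the paper's own proof: the paper has Staller play $u_2$ (forcing $z_3$), then $v_2$ (forcing $u_3$), then $v_1$ creating the same double trap $v_3$--$v$, so you have merely transposed the first two forcing moves, and your verification that the threat against $u_3$ remains valid even after $u_3\in\mathfrak{D}$ (since winning sets are open neighbourhoods regardless of who holds the centre vertex) is sound. No gap; the argument matches the paper's.
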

\begin{proof}
Staller plays in the following way: $s_1 = u_2$ which forces $d_1 = z_3$ and $s_2 = v_2$ which forces $d_2 = u_3$. By playing $s_3 = v_1$ Staller creates a double trap $v_3-v$. In her next move Staller isolates either $v_2$ or $v_3$ by claiming $v_3$ or $v$. 
\end{proof}

\begin{lem}\label{lema3}
Consider the MBTD game on $G_3$. Let $u_1\in \mathfrak{S}$ and suppose that at least the vertices $u_3, z_1, z_2, z_3, z_4$ are free.  
Suppose that it is Staller's turn to make her move. Then, Staller wins.  
\end{lem}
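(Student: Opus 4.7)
The plan is to exhibit a single winning first move for Staller and recognize the resulting position as an instance of the vertex-diamond trap described in the Preliminaries. Specifically, I will have Staller play $s_1 = z_1$. I then argue that this is precisely a vertex-diamond trap on the subgraph of $G_3$ under the substitution $v := u_2$, $y_1 := u_1$, $y_2 := u_3$: the edges $u_2u_1, u_2u_3, u_2z_1$ all lie in $E(G_3)$, the four vertices $z_1, z_2, z_3, z_4$ form the diamond $Z$ with the correct edge set, the hypothesis $u_1 \in \mathfrak{S}$ supplies $y_1 \in \mathfrak{S}$, and the hypothesis that $u_3, z_1, z_2, z_3, z_4$ are free matches the requirement that $y_2, z_1, z_2, z_3, z_4$ are free.

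For transparency, I will then spell out the two simultaneous threats created by $s_1 = z_1$. After this move Staller has $\{u_1, z_1\} \subseteq \mathfrak{S}$, so (i) by playing $u_3$ on her next turn she would place all of $N_G(u_2) = \{u_1, u_3, z_1\}$ into $\mathfrak{S}$ and thereby isolate $u_2$, and (ii) by playing $z_3$ she would produce a double trap $z_2$--$z_4$ inside the diamond $Z$ as in the diamond-trap analysis. Defending (i) requires Dominator to claim $u_3$, while defending (ii) requires him to claim some vertex in $\{z_2, z_3, z_4\}$; since these two sets are disjoint, no single response can parry both threats. Whichever threat Dominator fails to block, Staller carries out on her following move, winning the game.

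The only subtlety worth flagging is that the lemma's hypothesis does not specify the status of $u_2$, so I should note that this does not matter: the isolation of $u_2$ only requires all three of its neighbours to lie in $\mathfrak{S}$ and is independent of whether $u_2$ itself is free, already in $\mathfrak{D}$, or in $\mathfrak{S}$, while the diamond trap argument within $Z$ does not mention $u_2$ at all. I therefore expect no real obstacle; the main work is just the verification that the premises of the vertex-diamond trap are fulfilled, after which the conclusion of that trap does all the heavy lifting.
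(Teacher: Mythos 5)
Your proof is correct and follows exactly the paper's argument: the paper's proof also consists of playing $s_1 = z_1$ and observing that this creates the vertex-diamond trap $u_3$--$Z$ (with $u_2$ in the role of $v$), from which Dominator cannot recover. Your write-up merely makes explicit the verification of the trap's premises and the two simultaneous threats, which the paper leaves implicit.
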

\begin{proof}
Staller plays $s_1 = z_1$ and creates a vertex-diamond trap $u_3-Z$. Dominator can not win. 
\end{proof}

\begin{lem}\label{lema4}
Staller wins the $S$-game on $G_4$.
\end{lem}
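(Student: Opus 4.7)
The plan is for Staller to open with $s_1 = z_1$, which initiates a diamond trap on $Z$ (the imminent threat being $s_2 = z_3$, producing the double trap $z_2$--$z_4$). The analysis splits into two cases according to Dominator's reply $d_1$.

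First, suppose $d_1 \notin V(Z)\setminus\{z_1\}$, so Dominator has not defended $Z$. Staller plays $s_2 = z_3$. Both $z_2$ and $z_4$ now have their two diamond neighbours in $\{z_1,z_3\}\subseteq\mathfrak{S}$ and only a single free neighbour left, namely each other. Since $d_2$ can claim only one of $\{z_2,z_4\}$, Staller picks the other on $s_3$ and isolates its partner.

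Second, suppose $d_1\in\{z_2,z_3,z_4\}$. Staller plays $s_2 = u_3$. Because $N_{G_4}(u_1) = \{u_2,u_3\}$ and now $u_3\in\mathfrak{S}$, Dominator is forced to reply $d_2 = u_2$; otherwise $s_3 = u_2$ would isolate $u_1$. After this forced sequence, the subgraph on $\{u_1,u_2,u_3\}\cup V(Y)$ replicates the configuration of Lemma~\ref{lema3}: the triangle $U$ and the diamond $Y$ are joined by the edge $u_2y_1$, one triangle vertex ($u_3$) already lies in $\mathfrak{S}$, and $u_1,y_1,y_2,y_3,y_4$ are free. Staller now plays $s_3 = y_1$, which creates a vertex-diamond trap $u_1$--$Y$. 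Indeed, $N(u_2)=\{u_1,u_3,y_1\}$ has only $u_1$ still free (since $u_3,y_1\in\mathfrak{S}$), so Dominator must claim $u_1$ on $d_3$ to prevent $s_4 = u_1$ from isolating $u_2$; simultaneously, $y_1\in\mathfrak{S}$ starts a diamond trap on $Y$, so Dominator must also claim a vertex of $V(Y)\setminus\{y_1\}$ to avert the double trap $y_2$--$y_4$ that $s_4 = y_3$ would create. A single move $d_3$ cannot satisfy both demands, so Staller wins on $s_4$.

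The one small subtlety worth flagging is that ``Staller isolates $u_2$'' is still a legitimate winning event even though $u_2\in\mathfrak{D}$ after the forced sequence: the MBTD isolation condition only requires $N(u_2)\subseteq\mathfrak{S}$ and is indifferent to who owns the target vertex. Once this is observed, the argument reduces to the two-case split above, combining a diamond trap to stretch $d_1$ with a vertex-diamond trap in the spirit of Lemma~\ref{lema3} to stretch $d_3$ between $u_1$ and the diamond $Y$; no further computation is needed.
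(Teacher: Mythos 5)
Your Case 1 is fine, and your closing observation (that isolating $u_2$ is legitimate even though $u_2\in\mathfrak{D}$) is correct. The gap is in the hinge of Case 2: the forcing claim that $s_2=u_3$ compels $d_2=u_2$ ``because otherwise $s_3=u_2$ would isolate $u_1$''. This uses $N_{G_4}(u_1)=\{u_2,u_3\}$, i.e.\ it treats $G_4$ as a standalone graph in which $u_1$ has degree $2$. But by Definition~\ref{def1}, $G_4$ is a subgraph of the connected cubic graph $G$ on which the MBTD game is actually being played, so $u_1$ has a third neighbour outside $V(G_4)$; claiming $u_2$ and $u_3$ does not isolate $u_1$ in $G$. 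This matters precisely where the lemma is invoked: for instance in Case 1.ii.1 of the proof of Theorem~\ref{thm3} the triangle vertex playing the role of $u_1$ is $b_1$, whose outside neighbour is $a_1=d_1$, already Dominator's. There Dominator is free to ignore your $s_2=u_3$ (e.g.\ answer $u_1$), after which $u_2$ and $u_3$ can never be isolated and your remaining threats (a lone diamond trap on $Y$) do not produce a win. So as a tool for the theorems, the argument breaks exactly at this step; note that the paper's own proof is careful to threaten isolation only of $u_2$, $u_3$ and the interior diamond vertices, whose full $G$-neighbourhoods lie inside $G_4$.

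The repair is small and brings you essentially back to the paper's strategy with the roles of $Y$ and $Z$ exchanged: after $s_1=z_1$, if Dominator answers inside $Z$, play $s_2=u_1$ (claim the degree-two vertex rather than trying to isolate it). This threatens $u_3$, since $N_G(u_3)=\{u_1,u_2,z_1\}$ with $u_1,z_1\in\mathfrak{S}$, so $d_2=u_2$ is genuinely forced; then $s_3=y_1$ creates the vertex-diamond trap $u_3$--$Y$ (the threat to claim $u_3$ isolates $u_2$, whose whole neighbourhood $\{u_1,u_3,y_1\}$ lies in $G_4$, alongside the diamond trap on $Y$), and every vertex Staller isolates has all its $G$-neighbours inside $G_4$. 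The paper runs the same idea in the order $s_1=y_1$, $s_2=u_1$, $s_3=z_1$.
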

\begin{proof}
Consider the $S$-game on $G_4$. 
Staller plays in the following way: $s_1 = y_1$ which forces $d_1 \in \{y_2,y_3,y_4\}$ (a diamond trap on $Y$), 
as otherwise Staller will claim $y_3$ in her second move and then in her third move she can isolate $y_2$ or $y_4$. 
Next, $s_2 = u_1$ which forces $d_2 = u_3$. By playing
$s_3 = z_1$ Staller creates a vertex-diamond trap $u_2-Z$.  
\end{proof}

\begin{lem}\label{lema5}
Consider the MBTD game on the connected cubic graph $\eta $ illustrated in Figure \ref{f4}. In the $D$-game, if $d_1 \in \{h_1, h_3\}$, Dominator wins. Otherwise, Staller wins as the second player. In the $S$-game on $\eta $ Staller wins. 
\begin{figure}[!h]
\centering
 \includegraphics[scale=0.7]{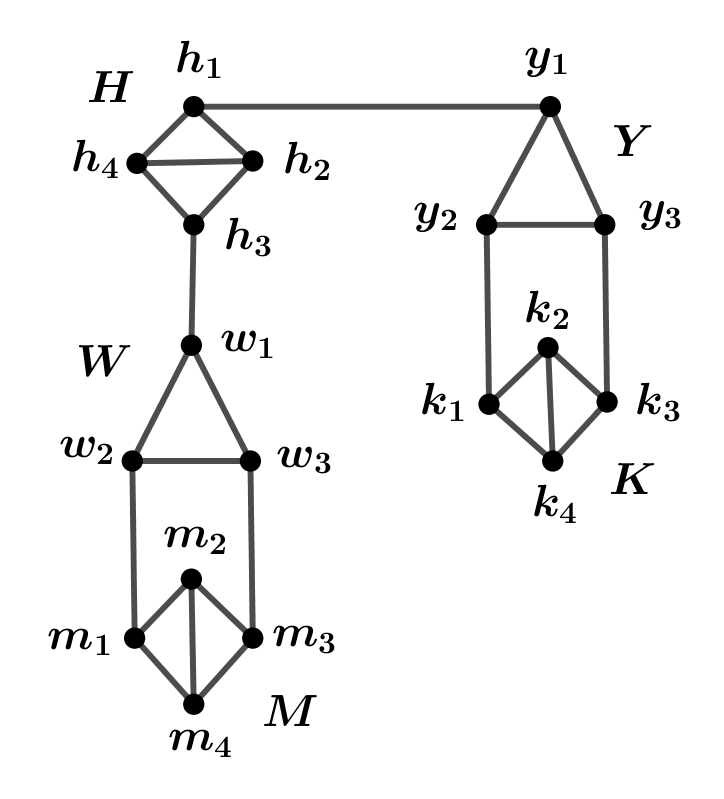}
 \caption{The graph $\eta $.}
 \label{f4}
\end{figure}
\end{lem}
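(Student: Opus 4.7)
The plan is to treat the three assertions separately: first the pairing strategy proving Dominator wins the $D$-game when $d_1\in\{h_1,h_3\}$; then a case analysis showing Staller (as second player) wins for every other opening; and finally a short reduction handling the $S$-game.

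For the Dominator-wins direction, I would exhibit an explicit pairing strategy on $\eta$. Following the discussion of pairing strategies in the preliminaries, it suffices to partition $V(\eta)\setminus\{d_1\}$ into pairs (or into pairs together with one vertex already secured by $d_1$) so that every open neighborhood $N_\eta(v)$ contains at least one full pair, or is blocked by $d_1$ together with one element of a pair. Dominator's strategy is then mechanical: whenever Staller claims a vertex, he replies with its partner, thereby claiming a neighbour of every $v$. I expect the symmetry of $\eta$ to make the $d_1=h_3$ case reduce to the $d_1=h_1$ case by relabeling.

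For the second-player Staller-win in the $D$-game with $d_1\notin\{h_1,h_3\}$, I would run through the possible values of $d_1$ up to automorphisms of $\eta$. For each such $d_1$, I will exhibit an $s_1$ that either (i) directly produces one of the traps introduced in the preliminaries (double trap, diamond trap, or vertex-diamond trap), or (ii) leaves free within $\eta$ a copy of one of the configurations $G_1,G_2,G_3,G_4$ in a state to which Lemma~\ref{lema1}, Lemma~\ref{lema2}, Lemma~\ref{lema3}, or Lemma~\ref{lema4} applies with $u_1$ already in $\mathfrak{S}$. Since $\eta$ contains both a diamond and a triangle, the vertex-diamond trap from Lemma~\ref{lema3} is the natural tool; $s_1$ should be chosen so that after any $d_2$ Dominator plays, the free part of a diamond together with an adjacent vertex still forms the pattern required to invoke that lemma. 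The main obstacle will be the bookkeeping: one must verify for every branch that Dominator cannot simultaneously neutralize all of Staller's threats. I plan to cut the number of representative cases sharply using the automorphism group of $\eta$.

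For the $S$-game, I would use the standard ``pretend'' reduction. Staller picks as $s_1$ a vertex $v\notin\{h_1,h_3\}$, chosen so that every possible Dominator reply $d_1$ leaves a position that, relabeled, coincides with a position already analyzed in the previous part (where Dominator had opened ``badly'' and Staller, as second player, had a winning continuation). If $d_1\in\{h_1,h_3\}$ then the pretend picture is not literal, but Staller already holds the diamond-adjacent vertex $v$ and can still invoke the vertex-diamond trap locally. No genuinely new strategic idea should be needed beyond a careful choice of $v$; the effort is in checking that each of Dominator's possible $d_1$ still leaves one of Staller's $D$-game plans operational.
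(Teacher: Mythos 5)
Your Dominator-side plan has a genuine gap: no \emph{static} pairing strategy of the kind you describe exists on $\eta$ after $d_1=h_1$. For a fixed pairing to guarantee total domination, every vertex $v$ with $h_1\notin N_\eta(v)$ must contain a whole pair inside its open neighbourhood (having $d_1$ ``plus one element of a pair'' in $N_\eta(v)$ guarantees nothing, since Dominator may end up with the other element of that pair). Now look at the triangle $Y=\{y_1,y_2,y_3\}$ and the diamond $K=\{k_1,k_2,k_3,k_4\}$ of $\eta$, with $y_1h_1, y_2k_1, y_3k_3\in E(\eta)$. The requirements for $k_2$ and $k_4$ force pairs lying inside $K$; up to symmetry either the pair $\{k_1,k_3\}$ serves both, or one uses two pairs of the form $\{k_1,k_2\},\{k_3,k_4\}$, in which case $N(k_1)=\{k_2,k_4,y_2\}$ no longer contains any available pair. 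In the first case, $N(y_2)=\{y_1,y_3,k_1\}$ forces the pair $\{y_1,y_3\}$, and then $N(y_3)=\{y_1,y_2,k_3\}$ contains no pair disjoint from the ones already chosen. So the partition you postulate does not exist, and ``mechanical'' replies cannot work. This is exactly why the paper's strategy is adaptive: Dominator's reply to Staller's \emph{first} move in the $Y\cup K$ region (respectively in $H\cup W\cup M$) is chosen case by case ($k_1$ against $y_1$; $y_1$ against $y_2,y_3$; $k_3$ against $k_1$, etc.), and only \emph{then} a local pairing, depending on that first exchange, is fixed for the remaining vertices. Your proof would have to be rebuilt along these adaptive lines.

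The Staller-side parts are closer to the paper in spirit (traps plus Lemmas \ref{lema1}--\ref{lema4}), but as written they are plans rather than proofs, and the hardest case resists your recipe: when $d_1\in\{h_2,h_4\}$, Staller has neither an immediate double/vertex-diamond trap after one move nor a completely free copy of $G_1$--$G_4$ to quote --- note that $V(Y)\cup V(K)\cup V(H)$ is \emph{not} a copy of $G_4$, since $Y$ is joined to $K$ by two edges, and the copy of $G_4$ on $V(W)\cup V(H)\cup V(M)$ already contains Dominator's vertex. The paper handles this opening with an explicit forcing sequence ($s_1=k_1$ diamond trap, then $y_3,h_1,h_3,w_1,\dots$) before a trap materialises; some such multi-move sequence must be supplied. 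By contrast, your reduction for $d_1\notin V(H)$ (play the $S$-game of Lemma \ref{lema4} on the untouched side) and the symmetry reduction $h_3\mapsto h_1$ match the paper. For the $S$-game, be aware that the naive ``pretend to be second'' argument does not literally apply, precisely because second-player Staller does not win against $d_1\in\{h_1,h_3\}$; your remark that Staller's own first move (e.g.\ a degree-two diamond vertex such as $k_1$) must itself create the leverage is the right correction, but it still needs the explicit continuation.
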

\begin{proof}
Consider the $D$-game and let $d_1 \in \{h_1, h_3\}$. W.l.o.g.\ let $d_1 = h_1$.  
To dominate vertices from $V(Y) \cup V(K)$ Dominator plays in the following way:
\begin{enumerate}[{Case }1.]
\item If Staller's first move on $V(Y) \cup V(K)$ is $y_1$, Dominator responds with $k_1$. To cover the remaining vertices from $V(Y) \cup V(K)$, Dominator will use the pairing strategy on pairs $(y_2, k_3)$ and  $(k_2, k_4)$. If Staller claims $y_3$, Dominator will claim an arbitrary free vertex among $\{y_2,k_2,k_3,k_4\}$.
\item If Staller's first move on $V(Y) \cup V(K)$ is a vertex from $\{y_2, y_3\}$, Dominator responds with $y_1$. To dominate the remaining vertices from $V(Y) \cup V(K)$, Dominator will use the pairing strategy on pairs $(k_1, k_3)$ and  $(k_2, k_4)$.
\item If Staller's first move on $V(Y) \cup V(K)$ is $k_1$ (or $k_3$), Dominator responds with $k_3$ (or $k_1$). To cover the remaining vertices from $V(Y) \cup V(K)$, Dominator will use the pairing strategy, where the pairs are $(y_1, y_3)$ (or $(y_1, y_2)$) and  $(k_2, k_4)$. If Staller claims $y_2$ (or $y_3$), Dominator will claim an arbitrary free vertex from one of the pairs. 
\end{enumerate}
To cover vertices from $V(H) \cup V(W) \cup V(M)$ Dominator plays in the following way. 
First, he makes a pairing $(h_2, h_4)$ and $(m_2, m_4)$, and applies a pairing strategy there. So, suppose, $h_2, m_2 \in \mathfrak{D}$. It is enough to consider the following cases. 
\begin{enumerate}[{Case }1.]
\item  If Staller's first move on $V(H) \cup V(W) \cup V(M)$ is $h_3$, Dominator responds with $w_1$. In order to cover vertices from $V(W) \cup V(M)$ Dominator will use the pairing strategy on pairs $(w_2, w_3)$ and $(m_1, m_3)$. 
\item If Staller's first move on $V(H) \cup V(W) \cup V(M)$ is $w_1$ (or $w_2$), Dominator responds with $m_1$ (or $m_3$). In order to cover vertices from $V(H) \cup V(W) \cup V(M)$ Dominator will use the pairing strategy on pairs $(w_2, m_3)$ (or $(w_1, m_1)$)  and $(w_3, h_3)$. 
\item If Staller's first move on $V(H) \cup V(W) \cup V(M)$ is $w_3$, Dominator responds with $m_1$. In order to cover vertices from $V(H) \cup V(W) \cup V(M)$ Dominator will use the pairing strategy on pairs $(w_2, h_3)$ and $(w_1, m_3)$.
\item If Staller's first move on $V(H) \cup V(W) \cup V(M)$ is $m_1$ (or $m_3$), Dominator responds with $m_3$ (or $m_1$). In order to cover vertices from $V(H) \cup V(W) \cup V(M)$ Dominator will use the pairing strategy on pairs $(w_1, w_3)$ and $(w_2, h_3)$
(or $(w_1, w_2)$ and $(w_3, h_3)$).
\end{enumerate}
Next, suppose that $d_1 \in \{h_2, h_4\}$. W.l.o.g.\ let $d_1 = h_2$.  
Staller plays in the following way: $s_1 = k_1$ which forces $d_2 \in V(K)\setminus \{k_1\}$ (a diamond trap), $s_2 = y_3$ which forces $d_3 = y_1$, $s_3 = h_1$ which forces $d_4 = y_2$ and $s_4 = h_3$ which forces $d_5 = h_4$. Next, $s_5 = w_1$. Afterwards,
\begin{enumerate}
\item[-] if $d_6 = w_2$ (or $d_6 = w_3$), then $s_6 = m_1$ (or $s_6 = m_3$) and Staller creates a vertex-diamond trap $w_3-M$ (or $w_2-M$). 
\item[-] if $d_6 \in \{m_1, m_2, m_4\}$ (or $d_6 = m_3$), then $s_6 = w_2$ (or $s_6 = w_3$) and Staller creates a double trap $m_3-w_3$ (or $m_1-w_2$). In her next move Staller isolates either $w_3$ or $w_1$ (or, $w_2$ or $w_1$).  
\end{enumerate}
Next, suppose that $d_1 \notin V(H)$. W.l.o.g.\ let $d_1 \in V(Y) \cup V(K)$. Then, Staller plays on the subgraph $G_4$ with the vertex set $V(W) \cup V(H) \cup V(M)$. By Lemma \ref{lema4}, Staller wins.
In the the $S$-game, Staller  uses the same strategy. 
\end{proof}

\begin{lem}\label{lema6}
Consider the MBTD game on the connected cubic graph $\omega $ illustrated in Figure \ref{f5}, where the chain of diamonds adjacent to $A$ can consist of one or more diamonds. In the $D$-game, if $d_1 = a_1$ Dominator wins, if $d_1 \in V(D_1)$ Staller wins.  In the $S$-game on $\omega $ Staller wins. 
\begin{figure}[!h]
\centering
 \includegraphics[scale=0.7]{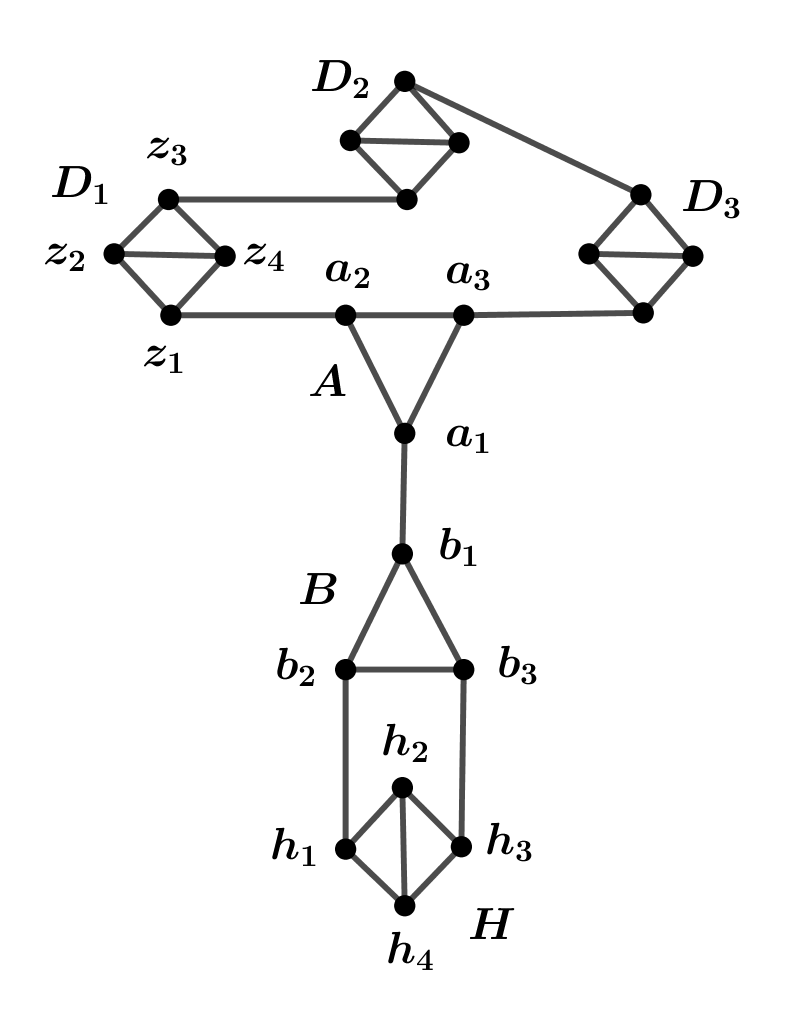}
 \caption{An example of a graph $\omega$, where the chain of diamonds consists of three diamonds.}
 \label{f5}
\end{figure}
\end{lem}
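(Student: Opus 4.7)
The plan is to split the proof into three parts matching the three assertions: (i) Dominator wins the $D$-game when $d_1 = a_1$; (ii) Staller wins the $D$-game when $d_1 \in V(D_1)$; (iii) Staller wins the $S$-game. The essential tools are a pairing strategy for (i) and the trap lemmas \ref{lema3}, \ref{lema4} for (ii) and (iii).

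For (i), I would have Dominator first play $d_1 = a_1$, which immediately puts a neighbour of $a_2$ and $a_3$ into $\mathfrak{D}$, so that the two non-special triangle vertices are pre-dominated. It then suffices to cover the open neighbourhoods of the diamond vertices and of $a_1$. On each diamond $D_i$ with outer vertices $z_{i,1}, z_{i,3}$ and central vertices $z_{i,2}, z_{i,4}$, I pair the board as $(z_{i,2}, z_{i,4})$ and $(z_{i,1}, z_{i,3})$. The first pair sits in the open neighbourhood of every vertex of $D_i$ (hence dominates $z_{i,1}$ and $z_{i,3}$, and also $a_1$ whenever $D_1$ or $D_k$ is the chain-end adjacent to $a_1$); the second pair sits in $N(z_{i,2}) \cap N(z_{i,4})$. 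Whenever Staller claims a vertex of $D_i$, Dominator responds with its partner; if Staller plays at the triangle-chain interface outside the diamonds, Dominator plays any still-legal partner move. A short case check vertex-by-vertex then shows that $\mathfrak{D}$ is a total dominating set.

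For (ii), I would use the fact that $d_1 \in V(D_1)$ means no triangle vertex has yet been claimed by Dominator. Staller responds with $s_1 = a_1$. This places a Staller-claim on a degree-$2$-in-$A$ vertex of the triangle that is adjacent to a still-untouched diamond further along the chain (or to the last diamond $D_k$ together with the remaining neighbour structure); the subgraph consisting of the triangle $A$ and a diamond $D_j$ still free at the start of Staller's next move is a copy of the configuration $G_3$ with $u_1 = a_1$. Lemma \ref{lema3} then yields a vertex-diamond trap on $D_j$, and Staller wins. The analysis splits into subcases according to which vertex of $D_1$ Dominator actually claimed, with each subcase appealing to Lemma \ref{lema3} or Lemma \ref{lema4} on an appropriate free $G_3$- or $G_4$-subgraph.

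For (iii), in the $S$-game Staller opens with a first move that forces Dominator into the Staller-advantaged configuration of (ii): for example $s_1$ chosen in $D_1$, which after Dominator's reply leaves Staller in the hypothesis of Lemma \ref{lema3} or \ref{lema4} applied to the triangle together with an unexplored diamond; alternatively, she may directly invoke Lemma \ref{lema4} on a $G_4$-subgraph formed by $A$ and two adjacent diamonds in the chain. The main obstacle is the case analysis in (i), where I must verify that the pairing strategy remains feasible at the triangle-chain junctions — i.e.\ that claiming $a_1$ on move one, followed by the prescribed pair responses, never leaves Dominator forced to answer two different pairs simultaneously, and that every open neighbourhood, especially those of the outer diamond vertices that also see into the neighbouring diamond or into $A$, is covered. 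This is where the choice $d_1=a_1$ (rather than any other first move) is essential, because it alone breaks the potential double- and vertex-diamond traps that Staller would otherwise create in the opening rounds.
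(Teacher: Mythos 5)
There is a genuine gap, and it starts with the graph itself: $\omega$ is not a triangle $A$ attached to a chain of diamonds. In the paper's Figure~\ref{f5}, only $a_2$ and $a_3$ meet the chain; the third triangle vertex $a_1$ is joined to a second triangle $B=\{b_1,b_2,b_3\}$ via $a_1b_1$, and $B$ is in turn attached to a diamond $H$ via $b_2h_1$ and $b_3h_3$ (this is how the cubic degrees close up -- a triangle plus a chain alone cannot be cubic, since the chain offers only two pendant edges for the three triangle vertices). Your part (i) never dominates this half of the graph. Under your pairing, $N(a_1)=\{a_2,a_3,b_1\}$ contains no pair and no pre-claimed vertex (your fallback ``$a_1$ is dominated by a chain-end pair'' is vacuous, as $a_1$ has no chain neighbour), and nothing covers $b_1,b_2,b_3,h_2,h_4$. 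This is not a routine omission: on $V(B)\cup V(H)$ no static pairing of the required kind exists (the neighbourhoods $N(b_2)=\{b_1,b_3,h_1\}$, $N(b_3)=\{b_1,b_2,h_3\}$, $N(h_2)$, $N(h_4)$ cannot simultaneously be given disjoint pairs), which is exactly why the paper's Dominator plays adaptively there, pairing only $(a_2,a_3)$ and $(h_2,h_4)$ and then answering Staller's first move on $V(B)\cup V(H)$ by a short case analysis.

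Parts (ii) and (iii) inherit the same problem. After $d_1\in V(D_1)$ and your $s_1=a_1$, Dominator can reply $d_2=a_2$ (or $a_3$); this kills the isolation threats at $A$ (every vertex-diamond trap through $a_2$ or $a_3$ needs the third triangle vertex free), so no free $G_3$-configuration with $u_1=a_1$ materializes, and when the chain consists of a single diamond there is no free diamond adjacent to $A$ at all, so neither Lemma~\ref{lema3} nor Lemma~\ref{lema4} can be invoked on $A$ plus the chain. The paper's Staller instead attacks through the part you omitted: in the $D$-game she opens with $s_1=b_1$, uses the threats $h_1$/$h_3$ against $b_2,b_3$ to force Dominator onto $H$, then plays $a_1$ and finishes either with an explicit double trap in the one-diamond case or a vertex-diamond trap on a free diamond at $a_3$; in the $S$-game she plays $s_1=h_1$, $s_2=b_1$, $s_3=z_1$, $s_4=a_3$, creating the double trap $a_1$--$a_2$. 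Your one correct observation -- that for a chain with at least two diamonds the $S$-game is won immediately by Lemma~\ref{lema4} on $V(A)\cup V(D_1)\cup V(D_m)$ -- does not cover the one-diamond chain that the lemma explicitly allows, so even (iii) is incomplete as written.
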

\begin{proof}
We first look at the $D$-game on $\omega $. In his first move Dominator claimed $a_1$.
When Staller plays on  a diamond which is different from $H$, Dominator will apply a pairing strategy on that diamond, where one pair consists of two opposite vertices of that diamond and remaining two vertices in diamond form the other pair. 
Dominator also makes a pairing $(a_2, a_3)$ and $(h_2, h_4)$, and apply the pairing strategy when Staller plays there. Let $a_2, h_2 \in \mathfrak{D}$. 
\begin{enumerate}[-]
    \item If Staller's first move on $V(B)\cup V(H)$ is $b_1$, Dominator responds with $h_1$. If Staller in her next move claims a vertex from $\{b_2, b_3, h_3\}$, Dominator claims the free vertex from $\{b_2, h_3\}$ and in this way he covers all vertices from the graph.
\item If Staller's first move on $V(B)\cup V(H)$ is $b_2$ (or $b_3$), Dominator responds with $h_1$ (or $h_3$). If Staller in her next move claims a vertex from $\{b_1, h_3, b_3\}$ (or $\{b_1, h_1, b_2\}$), Dominator claims the free vertex from $\{b_1, h_3\}$ (or $\{b_1, h_1\}$) and he covers all vertices. 
\item If Staller's first move on $V(B)\cup V(H)$ is $h_1$ (or $h_3$), Dominator responds with $h_3$ (or $h_1$). In his next move Dominator claims a free vertex from $\{b_1, b_3\}$ (or $\{b_1, b_2\}$). 
\end{enumerate}
Let $d_1\in V(D_1)$. Then, Staller, in her first move plays 
$s_1=b_1$. If $d_2\in V(A)\cup V(B) \cup V(D_i)$, then Staller can create $b_3-H$ trap or $b_2-H$ trap by claiming $h_1$ if $d_2=b_2$, or $h_3$ if $d_2=b_3$. So, Dominator needs to play his second move on $H$. 
If $d_2\in \{h_1, h_2, h_4\}$ (or $d_2=h_3$), then $s_2=b_2$ (or $s_2=b_3$) forcing $d_3=h_3$ (or $d_3=h_1$). Next, $s_3=a_1$ forcing $d_4=b_3$ (or $d_4=b_2$). \\
If chain of diamonds consist only of one diamond, say $D_1$, then if $d_1 \in \{z_1, z_2, z_4\}$ Staller plays $s_4=a_2$ creating $z_3-a_3$ trap, and if $d_1=z_3$ Staller plays $s_4=a_3$ creating $z_1-a_2$. 
Otherwise, if chain has more than one diamond, then Staller claims a vertex from a diamond incident with $a_3$ and creates vertex-diamond trap.  \\ \\
Consider the $S$-game. Staller plays in the following way: $s_1 = h_1$ which forces $d_1 \in V(H) \setminus \{h_1\}$ (a diamond trap), $s_2 = b_1$ which forces $d_2 = b_3$, $s_3 = z_1$ which forces $d_3 \in V(D_1) \setminus \{z_1\}$ (a diamond trap) and $s_4 = a_3$. Staller creates a double trap $a_1-a_2$.
\end{proof}

In the following we look at the graph on $n$ vertices that consists on vertex-disjoint triangles and prove that Staller wins even as the second player if $n>6$. 

\begin{proof}[of Theorem \ref{thm2}]
First, let $n=6$. Consider the $S$-game. 
Let $U$ be a triangle with the vertex set $\{u_1,u _2, u_3\}$ and let $V$ be a triangle with the vertex set $\{v_1,v _2, v_3\}$. Let $u_iv_i \in E(G)$ for every $i\in \{1,2,3\}$. W.l.o.g.\, suppose that Staller in her first move chooses a vertex $u_1$. Then Dominator will choose a vertex from the opposite triangle $V$ which is not adjacent to $u_1$, say a vertex $v_2$. In her second move Staller needs to claim $u_2$, as otherwise Dominator will win after his second move. If $s_2 = u_2$, then $d_2 = v_3$. In his third move Dominator will claim a free vertex from $\{v_1, u_3 \}$ and win. One of these two vertices must be free after Staller's third move. \\ \\ 
Let $n>6$. Since the graph is cubic, the number of vertices needs to be even, so we have an even number of triangles. 
Consider the $D$-game on graph $G$. Suppose that in his first move Dominator claims some vertex $a_1$ which belongs to a triangle $A$ with the vertex set $V(A) = \{a_1, a_2, a_3\}$. 
\paragraph{Case 1} Let $a_1b_1 \in E(G)$, where $b_1$ is a vertex of some triangle $B$ with the vertex set $V(B) = \{b_1, b_2,b_3\}$ and there is only one edge between $A$ and $B$.
We consider the following subcases. 
\paragraph{1.i.} Triangle $B$ is adjacent to one more triangle, say $Y$ with the vertex set $V(Y)= \{y_1, y_2, y_3\}$. So, there are two edges between $B$ and $Y$, say $b_2y_2$ and $b_3y_3$.
Let $y_1' \in N_G(y_1)$ for some $y_1' \in V(G)\setminus \{y_2, y_3\}$. Consider a subgraph induced by $\{a_1, b_1, b_2, b_3, y_1, y_2, y_3, y_1'\}$, where $a_1$ is claimed by Dominator and now it is Staller's turn to make her move. By Lemma \ref{lema1} it follows that Staller wins.  
\paragraph{1.ii.} Triangle $B$ is adjacent to two more triangles, say $Y$ with the vertex set $V(Y)= \{y_1, y_2, y_3\}$, and $W$ with the vertex set $V(W)= \{w_1, w_2, w_3\}$. Let $b_2w_2, b_3y_3 \in E(G)$. 
Let $w_1' \in N_G(w_1)$ for some $w_1' \in V(G)\setminus \{w_2, w_3\}$ and let $w_3' \in N_G(w_3)$ for some $w_3' \in V(G)\setminus \{w_1, w_2\}$. \\
In her first move Staller plays $s_1 = b_1$. The rest of the Staller's strategy depends on Dominator's second move. So, we analyse the following cases.
\begin{enumerate}
\item[1.ii.1.] $d_2 \in V(A)$. \\
Then, $s_2 = b_2$ which forces $d_3 = y_3$, $s_3 = w_2$ which forces $d_4 = b_3$. 
If $w_1' = a_i$ (or $w_3' = a_i$), for some $i\in \{2,3\}$, and $a_i$ is claimed by Dominator in his second move, then $s_4 = w_1$ (or $w_3$). In this way Staller creates a double trap $w_3-w_3'$ (or $w_1-w_1'$). In her next move Staller isolates either $w_2$ or $w_3$ (or, either $w_2$ or $w_1$).
Otherwise, Staller can claim any of the vertices $w_1$, $w_3$ in her fourth move and then play in the same way as above, i.e. she creates a double trap and wins in the following move. 
\item[1.ii.2.] $d_2 \in V(B)$. \\
W.l.o.g.\, let $d_2 = b_3$. 
\item[1.ii.2.1.] Triangle $Y$ is adjacent to two more triangles, say $K$ with the vertex set $V(K) = \{k_1,k_2,k_3\}$ and $H = \{h_1,h_2,h_3\}$. Let $y_2k_2 \in E(G)$ and $y_1h_1 \in E(G)$.  
Then, $s_2 = y_3$ which forces $d_3 = b_2$, $s_3 = y_1$ which forces $d_4 = k_2$ and $s_4 = h_1$ which forces $d_5 = y_2$. Next, $s_5 = h_2$ and Staller creates a double trap $h_3-h_3'$, where $h_3' \in N_G(h_3)$ for some $h_3' \in V(G) \setminus \{h_1, h_2\}$. In her next move Staller isolates either $h_1$ or $h_3$. \\
The statement holds also if $h_3' \in (V(K)\setminus \{k_2\}) \cup (V(W)\setminus \{w_2\}) \cup (V(A)\setminus \{a_1\}) $. \\
It could be the case that one of these triangles $K$, $H$ is the triangle $W$. The statement also holds in this case. \\
If $K=A$, the statement also holds. If $H=A$, the proof is very similar, but simpler, as the Staller wins in her fifth move. 
\item[1.ii.2.2.] Triangle $Y$ is adjacent to one more triangle, say $K\neq A$ with the vertex set $V(K) = \{k_1,k_2,k_3\}$. Let $y_1k_1, y_2k_2 \in E(G)$. Assume that $k_3'\in N_G(k_3)$ for some $k_3' \in V(G) \setminus \{k_1, k_2\}$. 
Since the graph induced by $\{b_3, y_1, y_2, y_3, k_1, k_2, k_3, k_3'\}$ is a variant of graph $G_1$, where $b_3 \in \mathfrak{D}$. According to Lemma \ref{lema1}, Staller wins. \\ 
If $K=W$, the statement also holds. 
\item[1.ii.2.3.] Triangle $Y$ is adjacent to triangle $A$ and there are two edges between them, say $y_2a_2, y_1a_3$. \\
Then, Staller plays  $s_2 = y_3$ which forces $d_3 = b_2$. By playing $s_3 = a_2$ Staller creates a double trap $a_3-y_1$. In her next move Staller isolates either $a_1$ or $y_2$. 
\item[1.ii.3.] $d_2 \in V(Y) \cup V(W)$. W.l.o.g.\ let $d_2 \in V(Y)$. \\
Let $w_1' \in N_G(w_1)$ and let $w_3' \in N_G(w_3)$. Consider the following subcases. 
\item[1.ii.3.1.] $d_2 \neq y_3$. W.l.o.g.\, let $d_2 = y_1$. \\
Then, $s_2 = w_2$ which forces $d_3 = b_3$ and $s_3 = b_2$ which forces $d_4 = y_3$. It is enough to consider the case if one of the vertices $w_1'$ and $w_3'$ is the vertex $y_1$. Let, for example, $w_1' = y_1$. Then, $s_4 = w_1$ and Staller creates a double trap $w_3-w_3'$. In her next move Staller isolates either $w_2$ or $w_3$. \\
Staller plays in the same way if $y_1 \notin \{w_1', w_3'\}$.
\item[1.ii.3.2.] $d_2 = y_3$. \\
Then, $s_2 = w_2$ which forces $d_3 = b_3$, $s_3 = w_1$ which forces $d_4 = w_3'$ and $s_4 = w_3$. Staller creates a double trap $b_2-w_1'$. In her next move Staller isolates either $w_2$ or $w_1$. 
\end{enumerate}
\paragraph{Case 2} Let $C$ be a triangle with the vertex set $V(C) = \{c_1, c_2, c_3\}$ such that $a_1c_1, a_2c_2\in E(G)$. 
Suppose that $C$ is adjacent to some triangle $B$ with the vertex set $V(B)=\{b_1, b_2, b_3\}$. If $b_ia_3 \notin E(G)$, for all $i\in \{1,2,3\}$, then the analysis from Case 1 can be applied on $B$. Otherwise, consider another triangle $T$ adjacent to $B$ and apply the adjusted analysis from Case 1 on triangle $T$ and its neighbours. \\
According to the analysed cases it follows that the graph $G$ is $\mathcal{S}$.  
\end{proof}

Next, we consider the connected cubic graph that consists of vertex-disjoint triangles and diamonds, and prove the Theorem \ref{thm3}. 

\begin{proof}[of Theorem \ref{thm3}]
Consider the following cases. 
\paragraph{Case 1} Let $d_1 = a_1 \in V(A)$, where $A$ is a triangle with the vertex set $V(A) = \{a_1,a_2,a_3\}$.
Consider the following cases. 
\begin{enumerate}
\item[\textbf{1.i.}] The vertex $a_1$ is adjacent to a diamond $H$ with the vertex set $V(H) = \{h_1, h_2, h_3, h_4\}$, where $E(H)= \{h_1h_2,h_2h_3,h_3h_4,h_4h_1,h_2h_4\}$. Let $a_1h_1 \in E(G)$. 
Consider the following subcases.
\begin{enumerate}
\item[\textbf{\textit{1.i.1.}}\label{thm2_case1_1_i_1}] Vertex $a_2$ is adjacent to a diamond $D_1$ different from $H$ and vertex $a_3$ is adjacent to a diamond $D_2$ (which can be equal to one of the diamonds $D_1$, $H$). \\
Since the number of triangles must be even there exists at least one more triangle, say $X$, with the vertex set $V(X) = \{x_1, x_2, x_3\}$ such that one of the cases 1.i.1.a., 1.i.1.b., 1.i.1.c., 1.i.1.d., 1.i.1.e. and 1.i.1.f. from  Figure \ref{graphs} holds. \\ 
 \begin{figure}[!h]
    \centering
    \subfigure[Case 1.i.1.a.]{\includegraphics[scale=0.7]{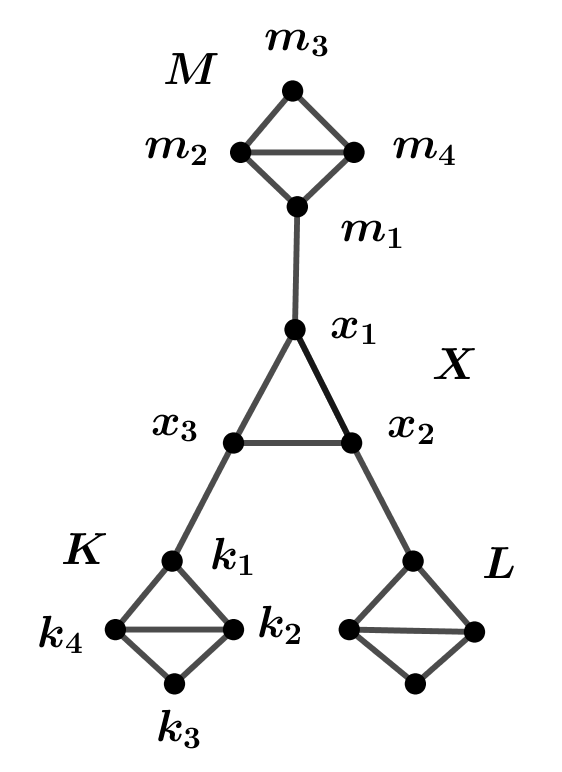}} 
    \hfill
    \subfigure[Case 1.i.1.b.]{\includegraphics[scale=0.7]{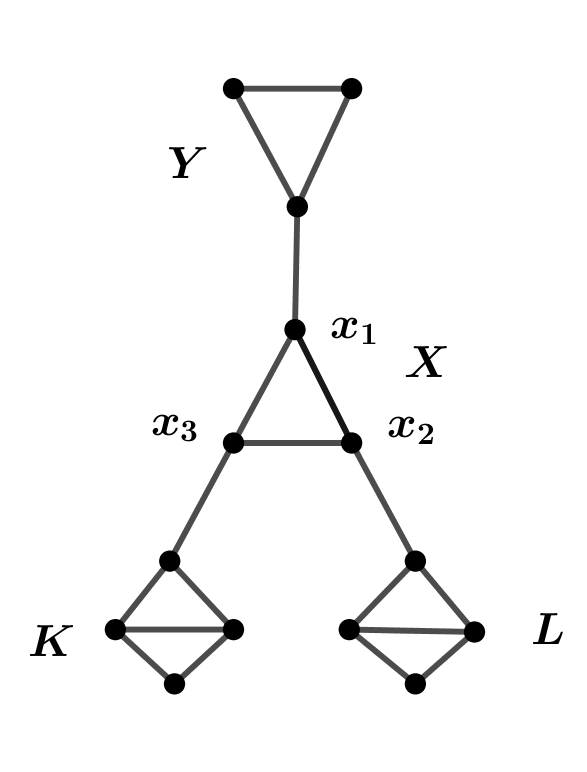}} 
    \hfill
    \subfigure[Case 1.i.1.c.]{\includegraphics[scale=0.7]{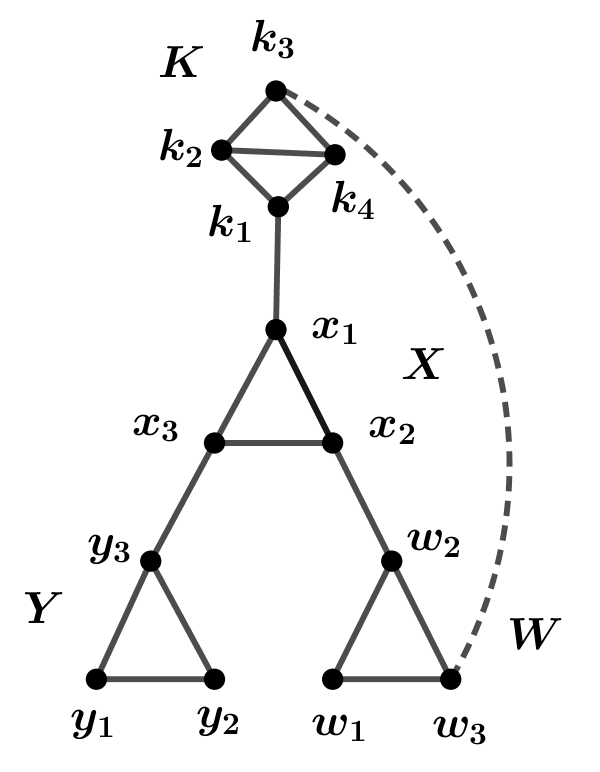}}
   \newline
    \subfigure[Case 1.i.1.d.]{\includegraphics[scale=0.7]{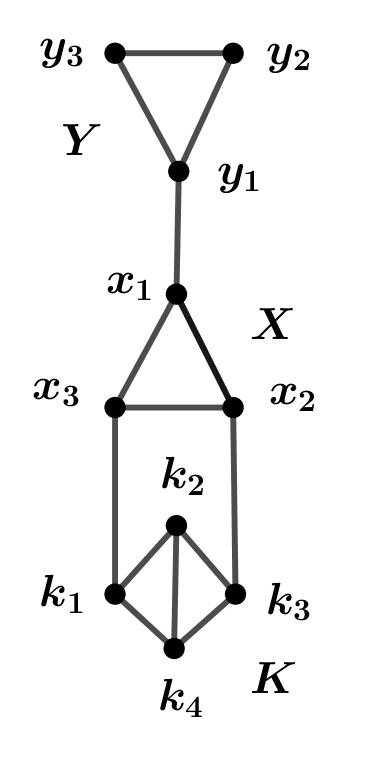}}
     \hfill
    \subfigure[Case 1.i.1.e.]{\includegraphics[scale=0.7]{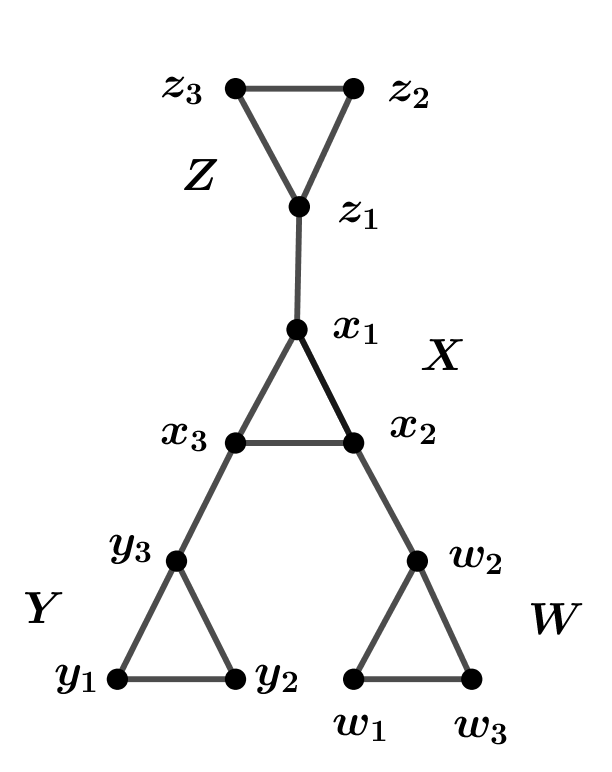}}
     \hfill
    \subfigure[Case 1.i.1.f.]{\includegraphics[scale=0.7]{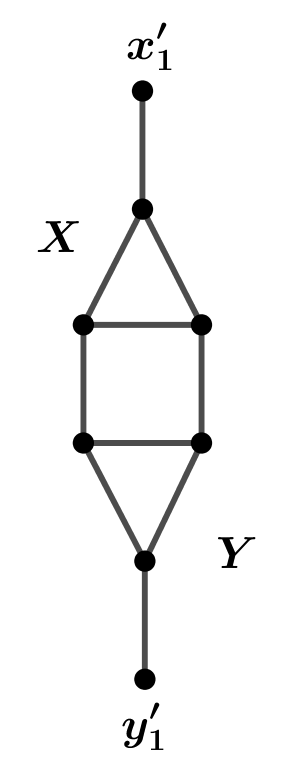}}
    \caption{Subgraphs illustration from Case 1.}
    \label{graphs}
\end{figure}
\noindent For cases 1.i.1.a. and 1.i.1.b, consider $S$-game on the subgraph $G_4$ which vertex set is a union of $V(X)$ and vertex sets of two diamonds that are adjacent to $X$. By Lemma \ref{lema4}, Staller wins. 
\item[1.i.1.c.] In case that one of the triangles $Y$ and $W$ is adjacent to $k_3$, then, w.l.o.g.\ suppose that triangle $W$ is adjacent to $k_3$. 
Then, Staller plays in the following way: \\
$s_1 = k_1$ which forces $d_2 \in V(K)\setminus \{k_1\}$ (a diamond trap), $s_2 = x_2$ which forces $d_3 = x_3$ and $s_3 = y_3$ which forces $d_4 = x_1$. Triangle $Y$ can be adjacent to some diamonds and/or triangles. The vertices of these diamonds and/or triangles together with $V(Y)$ form one of the subgraphs $G_1$, $G_2$ or $G_3$. According to the lemmas  \ref{lema1}, \ref{lema2} and \ref{lema3}, Staller wins. 
\item[1.i.1.d.] Staller plays in the following way: \\
$s_1 = k_1$ which forces $d_2 \in V(K)\setminus \{k_1\}$ (a diamond trap), $s_2 = x_2$ which forces $d_3 = x_1$ and $s_3 = y_1$ which forces $d_4 = x_3$. Triangle $Y$ can be adjacent to some diamonds and/or triangles. Vertices of these diamonds and/or triangles together with $V(Y)$ form one of the subgraphs $G_1$, $G_2$ or $G_3$. According to the lemmas  \ref{lema1}, \ref{lema2} and \ref{lema3}, Staller wins. 
\item[1.i.1.e.] If the triangles $X, Y, Z, W$ are adjacent only to triangles, then we can use the analysis from the proof of Theorem 1.2. Case 1, where $X=B, Z=A$ and $V(Z)\not\subseteq \mathfrak{D}$. Otherwise, if at least one of the triangles are adjacent to diamond then we can have a subgraph from Figure \ref{graphs} (b), (c) or (d) for which we can apply an analysis from the corresponding Case 1.i.1.b, 1.i.1.c or 1.i.1.d.
\item[1.i.1.f.] Consider the $S$-game on the subgraph $G_1$ with the vertex set $V(X) \cup V(Y) \cup \{x_1', y_1'\}$. By Lemma \ref{lema1}, Staller wins.

\item[\textbf{\textit{1.i.2.}} \label{thm2_case1_1_i_2}] The vertices $a_2$ and $a_3$ are adjacent to the same triangle, say $B$, where $a_2b_2, a_3b_3 \in E(G)$. 
\item[1.i.2.a.] $B$ is adjacent to a triangle $Y$ with the vertex set $V(Y) = \{y_1,y_2,y_3\}$. Let $b_1y_1 \in E(G)$. 
Then, there exists at least one more triangle, $X$ such that one
of the cases from Figure \ref{graphs} can occur, so adjusted analysis from Case 1.i.1. can be applied. 
\item[1.i.2.b.] $B$ is adjacent to a diamond $K$ with the vertex set $V(K) = \{k_1,k_2,k_3, k_4\}$, where $E(K)= \{k_1k_2,k_2k_3,k_3k_4,k_4k_1,k_2k_4\}$. Let $b_1k_1 \in E(G)$. 
Then, $s_1=k_1$ forces $d_2=V(K)\setminus \{k_1\}$, $s_2=b_2$ forces $d_3=b_3$, $s_3=a_3$ forces $d_4=b_1$ and $s_4=h_1$ creates a vertex-diamond trap $a_2-H$.
\item[1.i.2.c.] $B$ is adjacent to a diamond $H$, that is, $b_1h_3 \in E(G)$. Since induced subgraph with the vertex set $V(A) \cup V(B) \cup V(H)$ is a connected cubic graph, it follows that this graph is the graph $G$ on 10 vertices.  \\
In her first move Staller plays $s_1 = b_1$. Then
\item[-] if $d_2 = h_1$, Staller plays $s_2 = b_3$ which forces $d_3 = a_2$. Next, $s_3 = b_2$ and Staller creates a double trap $a_3-h_3$. In her next move she isolates either $b_3$ or $b_1$.
\item[-] if $d_2 \in V(H) \setminus \{h_1\}$, Staller plays $s_2 = a_2$ which forces $d_3 = b_3$. Next, $s_3 = a_3$ and Staller creates a double trap $h_1-b_2$. In her next move she isolates either $a_1$ or $b_3$.
\item[-] if $d_2 = a_2$ (or $d_2 = a_3$), Staller plays $s_2 = b_2$ (or $s_2 = b_3$) which forces $d_3 = a_3$ (or $d_3 = a_2$). Next, $s_3 = h_3$ and Staller creates a vertex-diamond trap $b_3-H$ (or $b_2-H$). 
\item[-] if $d_2 = b_2$ ($d_2 = b_3$), Staller plays $s_2 = a_2$ (or $s_2 = a_3$) which forces $d_3 = b_3$ (or $d_3 = b_2$). Next, $s_3 = h_1$ and Staller creates a vertex-diamond trap $a_3-H$ (or $a_2-H$). \\
It follows that the graph $G$ is $\mathcal{S}$.

\item[\textbf{\textit{1.i.3.}}] The vertex $a_2$ is adjacent to a triangle, say $T$ with the vertex set $V(T)=\{t_1, t_2, t_3\}$ and the vertex $a_3$ is adjacent to a triangle, say $B$ with the vertex set $\{b_1, b_2, b_3\}$. Let $a_2t_2, a_3b_3 \in E(G)$. \\
There exists at least one more triangle, say $X$ with the vertex set $V(X)=\{x_1, x_2, x_3\}$. If $X$ is not adjacent to any of $B$, $T$, then one of the cases from Figure \ref{graphs} can occur, so adjusted analysis from Case 1.i.1 can be applied. \\
If $X$ is adjacent to a triangle $B$ or $T$ and there are two edges between them, then we can use Lemma \ref{lema1}, as we have subgraph $G_1$. \\
Suppose that $X$ is adjacent to at least one of the triangles $B$ and $T$ and there is only one edge between $X$ and that triangle. Let $x_1b_1 \in E(G)$. Staller plays in the following way: $s_1=h_1$ forcing $d_2\in V(H)\setminus \{h_1\}$, $s_2=a_3$ forcing $d_3=a_2$, $s_3=b_2$ forcing $d_4=b_1$ and $s_4=x_1$ forcing $d_5=b_3$. \\
If none of $x_2$ and $x_3$ is not adjacent to $H$ then triangles and/or diamonds adjacent to $X$ together with $V(X)$ can form one of the subgraphs $G_1$, $G_2$ or $G_3$ and according to the lemmas \ref{lema1}, \ref{lema2}, and \ref{lema3}, Staller wins the game on these subgraphs. \\
If $x_2$ is adjacent to $H$ and $x_3$ is adjacent to some other diamond $K$, then we can use subgraph $G_3$ with $V(G_3)=V(X)\cup V(K)$, and according to Lemma \ref{lema3}, Staller wins. \\
Otherwise, suppose that $x_2$ is adjacent to $H$ and $x_3$ is adjacent to triangle, say $Y$ with the vertex set $V(Y)=\{y_1,y_2,y_3\}$, where $x_3y_3\in E(G)$. Then, $s_5=y_3$ forcing $d_6=x_2$. 
Triangle $Y$ can be adjacent to some diamonds and/or triangles. 
The vertices of these triangles and/or diamonds different from $A$ and $H$, which are adjacent to $Y$, together with $V(Y)$ and their neighbours form one of the subgraphs $G_1$, $G_2$ or $G_3$. According to the lemmas \ref{lema1}, \ref{lema2}, or \ref{lema3}, Staller wins the game on these subgraphs.  \\
If $x_3t_3 \in E(G)$, then $s_5=t_3$ forces $d_6=x_2$. If $t_1$ is adjacent to a diamond then consider subgraph $G_3$ and according to Lemma \ref{lema3}, Staller wins. Let $t_1$ be adjacent to a triangle $W$ with the vertex set $V(W) = \{w_1, w_2, w_3\}$ and let $t_1w_1 \in E(G)$. Then Staller plays $s_6=w_1$ which forces $d_7=t_2$. Triangle $W$ can be adjacent to triangles and/or diamonds different from $A,X,H$. Vertices of these triangles and/or diamonds together with $W$ can form one of the subgraphs $G_1, G_2$ or $G_3$. So, according to lemmas \ref{lema1}, \ref{lema2} or \ref{lema3}, Staller wins. 
\item[\textbf{\textit{1.i.4.}}] Let $a_2$ be adjacent to a diamond and $a_3$ to a triangle $B$ with the vertex set $V(B)=\{b_1, b_2, b_3\}$ and let $a_3b_3\in E(G)$. \\
If $b_1$ and $b_2$ are adjacent to two different diamonds, then consider subgraph $G_4$ and according to Lemma \ref{lema4} Staller wins. \\
If $b_1$ and $b_2$ are adjacent to the same diamond $K$ with the vertex set $V(K)=\{k_1,k_2,k_3,k_4\}$ and the edge set $E(K)=\{k_1k_2,k_2k_3,k_3k_4, k_4k_1, k_2k_4\}$. Let $b_1k_1, b_2k_3\in E(G)$.
Then, Staller plays in the following way:  $s_1=h_1$ forcing $d_2\in V(H)\setminus \{h_1\}$, $s_2=a_3$ forcing $d_3=a_2$ and $s_3=b_1$ forcing $d_4=b_2$. By playing $s_4=k_3$ Staller creates $b_3-K$ trap. \\
Otherwise, consider some triangle $X$ and the very similar analysis from previous Case 1.i.3 could be used. 
\end{enumerate}
So, the graph $G$ is $\mathcal{S}$.
\item[\textbf{1.ii.}] The vertex $a_1$ is adjacent to a triangle, say $B$ with the vertex set $V(B) = \{b_1, b_2, b_3\}$. Let $a_1b_1 \in E(G)$. Consider the following subcases. 
\begin{enumerate}
\item[\textbf{\textit{1.ii.1}}] The vertex $b_2$ is adjacent to some diamond, say $H$, and  the vertex $b_3$ is adjacent to a diamond, say $K$. Consider the $S$-game on the subgraph $G_4$ with the vertex set $V(B) \cup V(H) \cup V(K)$. By Lemma \ref{lema4}, Staller wins. So, $G$ is $\mathcal{S}$. 
\item[\textbf{\textit{1.ii.2}}] The vertices $b_2$ and $b_3$ are adjacent to the same diamond, say $H$ with the vertex set $V(H) = \{h_1,h_2,h_3,h_4\}$ and the edge set $E(H)=\{h_1h_2, h_2h_3, h_3h_4, h_4h_1, h_2h_4\}$, where $b_2h_1, b_3h_3 \in E(G)$. 
\item[1.ii.2.a.] If there are no more triangles in the graph $G$, then the vertices $a_2$ and $a_3$ are adjacent to the chain of diamonds.  
We have a graph $\omega $ from Figure \ref{f5}. According to Lemma \ref{lema6}, Dominator wins as the first player. 
\item[1.ii.2.b.] Otherwise, suppose that graph contains more triangles. 
If both $a_2$ and $a_3$ are adjacent to some diamonds which do not form a chain of diamonds, then consider some triangle $X$ which could not be adjacent either to $A$ or $B$. One of the cases from Figure \ref{graphs} has to occur, so adjusted analysis from Case 1.i.1 can be applied.  
\item[1.ii.2.c.] At least one of $a_2$, $a_3$ are adjacent to some triangle. Let $Y$ be a triangle with the vertex set $V(Y)=\{y_1, y_2, y_3\}$ such that $a_2y_2 \in E(G)$. \\
If $Y$ is adjacent to some diamond $K$ with the vertex set $V(K) = \{k_1,k_2,k_3,k_4\}$ and the edge set $E(K)=\{k_1k_2, k_2k_3, k_3k_4, k_4k_1, k_2k_4\}$, so there is at least one edge between $Y$ and $K$, say $y_1k_1 \in E(G)$. Staller plays in the following way: $s_1 = k_1$ forcing $d_2 \in V(K)\setminus \{k_1\}$, $s_2=y_3$ forcing $d_3=y_2$, $s_3=a_2$ forcing $d_4=y_1$, $s_4=b_1$ forcing $d_5=a_3$ and $s_5=h_1$, so Staller creates $b_3-H$ trap. \\
If $y_1$ and $y_3$ are adjacent to the same triangle, then consider subgraph $G_1$ of a given graph, so according to Lemma \ref{lema1}, Staller wins. \\
Otherwise, suppose that $y_1$ is adjacent to a triangle $W$ with the vertex set $V(W)=\{w_1, w_2, w_3\}$ and let $y_1w_1\in E(G)$, and $y_3$ is adjacent to a triangle $T$. 
If there is no edge between $A$ and $W$, or if $T=A$, then Staller plays in the following way: 
$s_1=h_1$ which forces $d_2\in V(H)\setminus \{h_1\}$, $s_2 = b_1$ which forces $d_3 = b_3$, $s_3 = a_2$ which forces $d_4 = a_3$, $s_4 = y_3$ which forces $d_5=y_1$ and $s_5 = w_1$ which forces $d_6=y_2$. Triangle $W$ can be adjacent to some triangles and/or diamonds different from $A$, $B$ and $H$. Vertices of these diamonds and/or triangles together with $V(W)$ form one of the subgraphs $G_1$, $G_2$ or $G_3$. According to the lemmas \ref{lema1}, \ref{lema2} and \ref{lema3}, Staller wins. \\
Otherwise, if there is an edge between $A$ and $W$, then the analysis above can be applied on triangle $T$ and its neighbours, instead of $W$.
\item[\textbf{\textit{1.ii.3}}] The vertex $b_2$ is adjacent to a diamond, say $H$ with the vertex set $V(H) = \{h_1,h_2,h_3,h_4\}$, where $b_2h_1 \in E(G)$ and vertex $b_3$ is adjacent to a triangle, say $W$, with the vertex set $V(W) = \{w_1, w_2, w_3\}$, where $b_3w_3 \in E(G)$. If $W$ is the part of a subgraph $G_1$, that is $W$ is adjacent to one more triangle and there are two edges between them, then by Lemma \ref{lema1}, Staller wins. Otherwise, Staller plays in the following way: \\
$s_1 = h_1$ which forces $d_2 \in V(H)\setminus \{h_1\}$ (a diamond trap), $s_2 = b_1$ which forces $d_3 = b_3$, $s_3 = w_3$ which forces $d_4 = b_2$.
Triangle $W$ can be adjacent to diamonds and/or triangles.
\item[1.ii.3.a.] If these diamonds and/or triangles are different from $A$ and $H$, then vertices of these diamonds and/or triangles together with $V(W)$ form one of the subgraphs $G_2$ or $G_3$. According to the lemmas \ref{lema2} and \ref{lema3}, Staller wins.  
\item[1.ii.3.b.] If at least one of the vertices $w_1, w_2$ is adjacent to $A$, e.g. let $w_2a_2 \in E(G)$, then by playing $s_4 = a_2$ Staller creates a double trap  $a_3-w_1$. In her next move Staller isolates either $a_1$ or $w_2$.  
\item[1.ii.3.c.] If $w_2$ is adjacent to some diamond $K$ with the vertex set $V(K)=\{k_1,k_2,k_3,k_4\}$, where $w_2k_1\in E(G)$, then Staller plays $s_4 =k_1$ and creates a vertex-diamond trap $w_1-K$. 
\item[1.ii.3.d.]  If $w_1h_3\in E(G)$ and $w_2$ is adjacent to some triangle different from $A$, say $R$, with the vertex set $V(R) = \{r_1, r_2, r_3\}$, where $w_2r_2 \in E(G)$. Then, Staller plays $s_4 = r_2$ and forces $d_5 = w_1$. Next, 
\item[-] if there is at least one edge between $R$ and $A$, say $a_3r_3$, then Staller plays $s_5 = a_3$ and creates a double trap $a_2-r_1$. In her next move she isolates $a_1$ or $r_3$. 
\item[-] if there are no edges between $R$ and $A$, then $R$ is adjacent to other diamonds and/or triangles. The vertices of these diamonds and/or triangles together with $V(R)$ form one of the subgraphs $G_1$, $G_2$ or $G_3$. According to the lemmas \ref{lema1}, \ref{lema2} or \ref{lema3}, Staller wins.  \\
So, $G$ is $\mathcal{S}$.
\item[\textbf{\textit{1.ii.4}}] The vertex $b_2$ is adjacent to a triangle, say $W$, with the vertex set $V(W) = \{w_1, w_2, w_3\}$, where $b_2w_2 \in E(G)$ and the vertex $b_3$ is adjacent to a triangle, say $Y$, with the vertex set $V(Y) = \{y_1, y_2, y_3\}$, where $b_3y_3 \in E(G)$. \\
Depending of the type of the neighbours of triangle $W$ we can use the adjusted analysis from the proof of Theorem \ref{thm2} for the Case 1.ii or analysis from the previous Case 1.ii.3. 
So, the graph $G$ is $\mathcal{S}$.
\item[\textbf{\textit{1.iii.}}] The vertex $a_1$ is adjacent with triangle $B$ with the vertex set $V(B)=\{b_1, b_2, b_3\}$ and there are two edges between them, $a_1b_1, a_2b_2\in E(G)$. \\
If there exists a diamond, say $H$ adjacent to $B$ or connected with $B$ by a path of triangles (see Figure \ref{fig4}), then Staller for her first move plays $h_1$, and then follows the strategy illustrated on Figure \ref{fig4}. She creates $a_3-b_3$ trap.
\begin{figure}[h]
    \centering
    \includegraphics[scale=0.8]{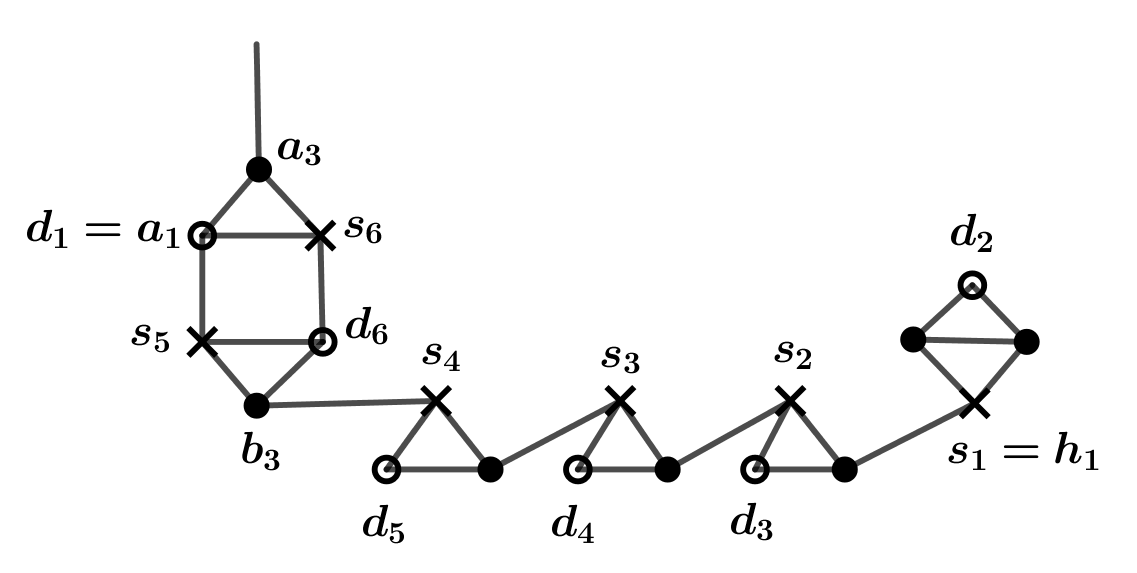}
    \caption{Possible situation for Case 1.iii.}
    \label{fig4}
\end{figure}
\end{enumerate}
\end{enumerate}
Otherwise, the adjusted analysis from Case 2 in the proof of Theorem \ref{thm2} can be applied to prove that Staller wins. 
\paragraph{Case 2} $d_1 \in V(Z)$, where $Z$ is a diamond with the vertex set $V(Z) = \{z_1, z_2, z_3, z_4\}$ and the edge set $E(Z) = \{z_1z_2, z_2z_3, z_3z_4, z_4z_1, z_2z_4\}$. Suppose that $d_1\in \{z_1, z_2\}$. 
Consider the following subcases.
\begin{enumerate}
\item[\textbf{2.i.}] If both $z_1$ and $z_3$ are adjacent to some diamonds then there exists a triangle $X$ adjacent to some triangles or diamonds different from $Z$, such that one of cases from Figure \ref{graphs} can occur. So, adjusted analysis from Case 1.i.1 can be applied. 
\item[\textbf{2.ii.}] Suppose that $z_1$ is adjacent to a diamond and $z_3$ is adjacent to a triangle, say $B$. Adjusted analysis from Case 1.ii can be applied on $B$ to prove that Staller wins. 
\item[\textbf{2.iii.}] Suppose that $z_1$ is adjacent to a triangle $A$ with the vertex set $V(A)=\{a_1, a_2, a_2\}$ and let $z_1a_3 \in E(G)$ and $z_3$ is adjacent to a diamond. \\
If $a_1$ is adjacent to a diamond $H$ and $a_2$ is adjacent to a diamond $K$, then we can consider subgraph $G_4$ on which, by Lemma \ref{lema4}, Staller wins.  \\
If $a_1$ and $a_2$ are adjacent to the same diamond, then we can find some triangle $X$ adjacent to some triangles and/or diamonds different from $A$ and $Z$. So, one case from Figure \ref{graphs} can occur and adjusted analysis from Case 1.i.1 can be applied.  \\
If triangle $A$ is adjacent to some triangle $B$ with the vertex set $V(B)=\{b_1, b_2, b_3\}$ and are two edges between $A$ and $B$ then we can consider subgraph $G_1$ and by Lemma \ref{lema1} Staller wins. Otherwise, if there is only one edge between $A$ and $B$, then, we can use adjusted analysis from Case 1.ii. 
\item[\textbf{2.iv.}] 
The vertex $z_1$ is adjacent to a triangle, say $A$, with the vertex set $V(A) = \{a_1, a_2, a_3\}$, where $z_1a_1 \in E(G)$ and the vertex $z_3$ is adjacent to a triangle, say $B$, with the vertex set $V(B) = \{b_1, b_2, b_3\}$, where $z_3b_1 \in E(G)$. 
If there are two edges between $A$ and $B$, let $a_2b_2, a_3b_3 \in E(G)$. Consider the MBTD game on the subraph $G_1$ with the vertex set $V(A) \cup V(B) \cup \{z_1, z_3\}$. By Lemma \ref{lema1}, Staller wins. So, $G$ is $\mathcal{S}$. \\

Next, suppose that there is one edge between $A$ and $B$, and let $a_3b_3 \in E(G)$.\\
If graph $G$ contains only these two triangles $A$ and $B$, then suppose that triangle
$A$ is adjacent to a diamond, say $K$, with the vertex set $V(K) = \{k_1, k_2, k_3, k_4\}$ and $E(K) = \{k_1k_2, k_2k_3, k_3k_4, k_4k_1, k_2k_4\}$. Let $a_2k_1 \in E(G)$.  We differentiate between the following cases:
\begin{enumerate}
\item[-] If $b_2k_3 \in E(G)$,  Staller plays in the following way: $s_1 = a_1$ which forces Dominator to claim a vertex from $V(K) \cup \{a_3\}$, as otherwise if Staller claims $k_1$ in her second move she will create a vertex-diamond trap $a_3-K$. \\
If $d_2 = a_3$, then $s_2 = b_3$ which forces $d_3 = a_2$, $s_3 = b_2$ which forces $d_4 = z_3$. Next, $s_4 = k_3$ and Staller creates a vertex-diamond trap $b_1-K$. \\
Otherwise, if $d_2 = k_1$ (or $d_2 \in \{k_2, k_3, k_4\}$), then $s_2 = b_3$ which forces $d_3 = a_2$, $s_3 = b_2$ which forces $d_4 = z_3$. Next, $s_4 = b_1$ (or $s_4 = a_3$) and Staller creates a double trap $a_3-k_3$ (or $k_1-b_1$). In her next move Staller isolates either $b_3$ or $b_2$ (or, $a_2$ or $b_3$). So, $G$ is $\mathcal{S}$.
\item[-] Triangle $B$ is not adjacent to a diamond $K$. 
Then, there exists at least one more diamond, say $M$ different from $K$ with the vertex set $V(M) = \{m_1, m_2, m_3, m_4\}$ adjacent to $B$, where $b_2m_1 \in E(G)$. Staller plays in the following way: \\
$s_1 = m_1$ which forces $d_2 \in V(M)\setminus \{m_1\}$ (a diamond trap), $s_2 = b_3$ which forces $d_3 = b_1$ and $s_3 = a_1$ which forces $d_4 = a_2$. Next, $s_4 = k_1$ and Staller creates a vertex-diamond trap $a_3-K$. Dominator can not win. So, $G$ is $\mathcal{S}$.
\end{enumerate}
Otherwise, graph $G$ contains at least four triangles. Consider some triangle $X$ different from $A$ and $B$. One of the cases from Figure \ref{graphs} must hold and adjusted analysis from Case 1.i.1 can be applied. So, $G$ is $\mathcal{S}$. 

Next, suppose there are no edges between $A$ and $B$. \\
If graph $G$ contains only these two triangles $A$ and $B$, then consider the following
\begin{enumerate}
\item[-] Let $K$ with the vertex set $V(K) = \{k_1, k_2, k_3, k_4\}$ be a diamond adjacent to $A$, where $a_2k_1 \in E(G)$, and let $M$ with the vertex set $V(M) = \{m_1, m_2, m_3, m_4\}$ be a diamond adjacent to $B$, where $b_2m_1 \in E(G)$. If $a_3k_3, b_3m_3 \in E(G)$, then we have the graph $\eta $ (see Figure \ref{f4}). By Lemma \ref{lema5}, if $d_1 \in \{z_1, z_3\}$, Dominator wins in the $D$-game. Otherwise, Staller wins. 
\item[-] Otherwise, at least one of the triangles $A, B$ is adjacent to two more diamonds (different from $Z$). Let $K$ and $L$ be two diamonds with the vertex sets $V(K) = \{k_1, k_2, k_3, k_4\}$ and $V(L) = \{l_1, l_2, l_3, l_4\}$, respectively, such that $a_2k_1, a_3l_1\in E(G)$. Staller plays on subgraph $G_4$ with the vertex set $V(A) \cup V(K) \cup V(L)$. By Lemma \ref{lema4}, Staller wins. Statement also holds if $L=M$. So, $G$ is $\mathcal{S}$.
\end{enumerate}
Otherwise, graph $G$ contains at least four triangles. Consider some triangle $X$ different from $A$ and $B$. One of the cases from Figure \ref{graphs} must holds and adjusted analysis from Case 1.i.1 can be applied. So, $G$ is $\mathcal{S}$.
\item[\textbf{2.v.}] 
Vertices $z_1$ and $z_3$ are adjacent to the same triangle, say $A$. Then, we can have situations from Figure \ref{fig1}. 
\end{enumerate}
\begin{figure}[!h]
    \centering
    \subfigure[Case 2.v.a.]{\includegraphics[scale=0.8]{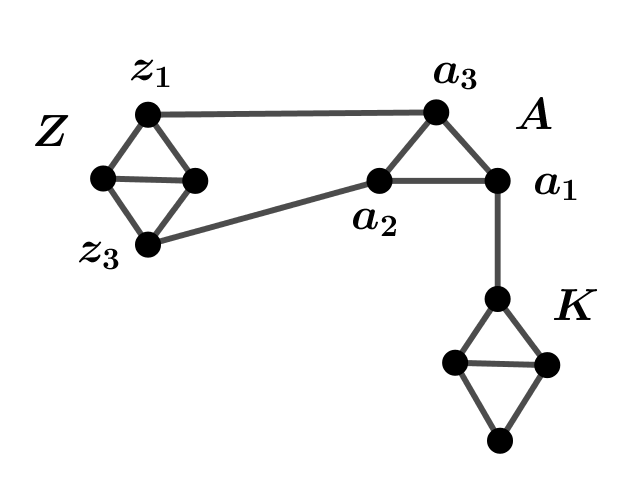}} 
    \hfill
    \subfigure[Case 2.v.b.]{\includegraphics[scale=0.8]{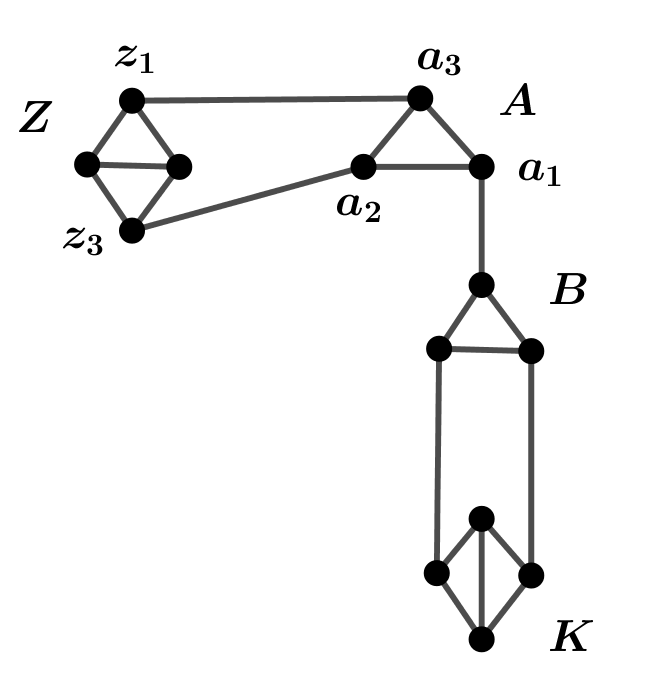}} 
    \hfill
    \subfigure[Case 2.v.c.]{\includegraphics[scale=0.8]{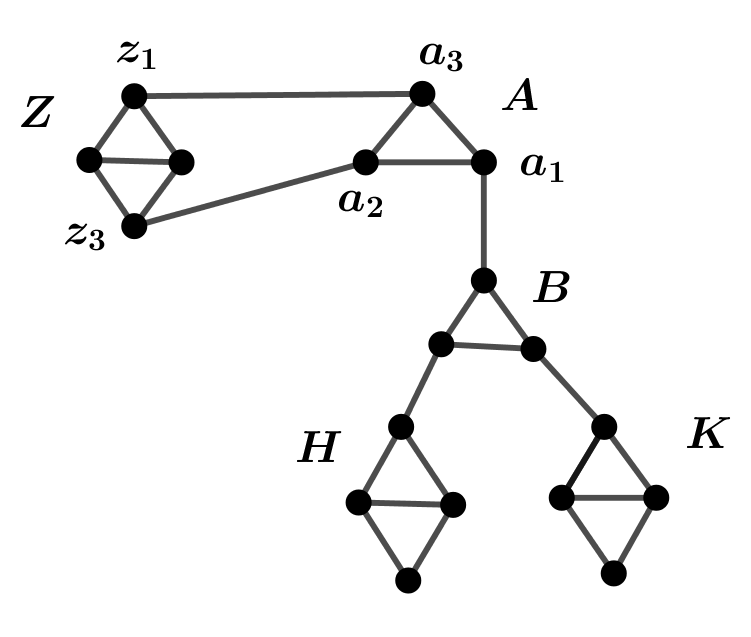}} 
     \hfill
    \subfigure[Case 2.v.d.]{\includegraphics[scale=0.8]{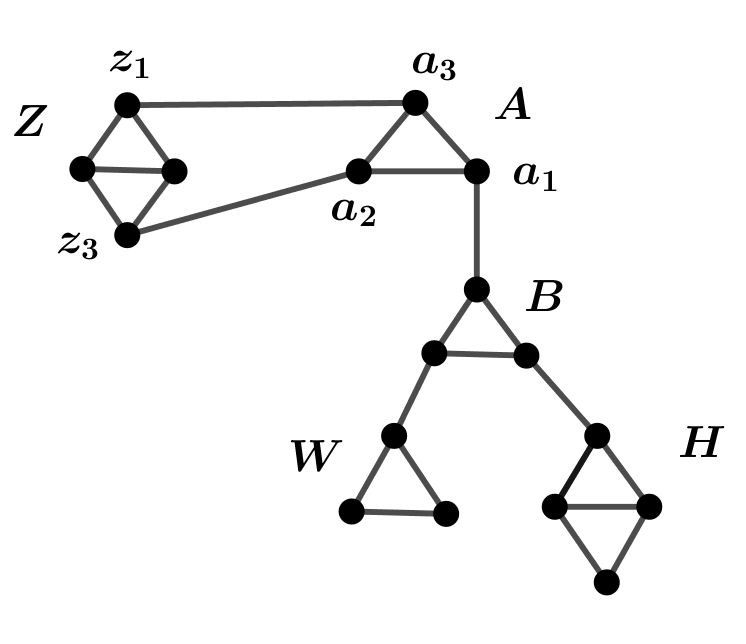}} 
     \hfill
    \subfigure[Case 2.v.e.]{\includegraphics[scale=0.8]{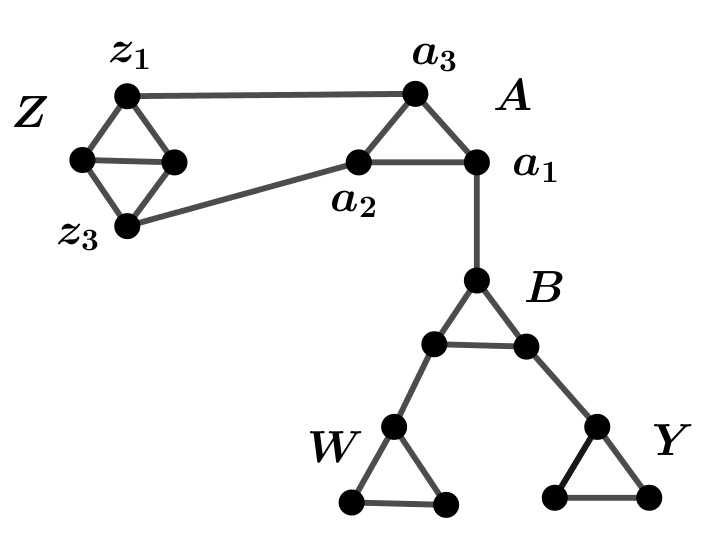}} 
    \caption{Possible situations for Case 2.v.} 
    \label{fig1}
\end{figure}
If we have Case 2.v.(a), then there exists a triangle $X$ such that one of the cases from Figure \ref{graphs} must holds and adjusted analysis from Case 1.i.1 can be applied. \\
If it is Case 2.v.(b), then according to Lemma \ref{lema6}, since $d_1 \in \{z_1, z_2\}$, Staller wins. \\
For Case 2.v.(c) consider subgraph $G_4$ on $V(B) \cup V(H) \cup V(K)$. By Lemma \ref{lema4}, Staller wins. 
For Case 2.v.(d) we can apply adjusted analysis from Case 1.ii.3. \\
For Case 2.v.(e) we can apply adjusted analysis from Case 1.ii.4. 
\end{proof}

Before we give the proof for Theorem~\ref{thm4}, we give the winning strategy for Dominator in the MBTD game on $\mathcal{G}_1 = GP(n,1)$, where $n\geq 3$. Note that it is already proven by~\cite{GHIK19} that $GP(n,1)$ is $\cD$ (precisely, the authors considered the prism $P_2\square C_n$, which is isomorphic to $GP(n,1)$). Here we give a shorter proof for that.
\begin{claim}
MBTD game on $GP(n,1)$, $n\geq 3$ is $\cD$.
\end{claim}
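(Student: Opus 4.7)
The plan is to have Dominator play the pairing strategy described in the preliminaries. Concretely, I would take the perfect matching
\[
\mathcal{M} = \bigl\{\, \{u_i, v_{i+1}\} : 0 \le i \le n-1 \,\bigr\}
\]
on $V(GP(n,1))$, with subscripts read modulo $n$. Dominator's strategy is the standard one: whenever Staller claims some vertex $w$, Dominator claims its $\mathcal{M}$-partner if that partner is still free, and an arbitrary free vertex otherwise.

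Correctness reduces to checking that every open neighbourhood of $GP(n,1)$ contains some pair of $\mathcal{M}$ in full. From the definition of $GP(n,k)$ with $k=1$, we have $N_{GP(n,1)}(u_i) = \{u_{i-1}, u_{i+1}, v_i\} \supseteq \{u_{i-1}, v_i\} \in \mathcal{M}$ and $N_{GP(n,1)}(v_i) = \{v_{i-1}, v_{i+1}, u_i\} \supseteq \{u_i, v_{i+1}\} \in \mathcal{M}$, so every $N_{GP(n,1)}(w)$ contains a pair of $\mathcal{M}$. A standard pairing-strategy induction (in the spirit of Proposition~9 of~\cite{DGPR18}) then shows that Dominator's final set $\mathfrak{D}$ contains at least one vertex of every pair of $\mathcal{M}$, hence at least one vertex of $N_{GP(n,1)}(w)$ for every $w \in V(GP(n,1))$; so $\mathfrak{D}$ is a total dominating set and Dominator wins, regardless of who plays first.

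The only creative step is choosing the right matching, and this is also the only place where something could plausibly go wrong. The naive ``rung'' pairing $u_i \leftrightarrow v_i$ fails, because neither $\{u_i, v_i\}$ nor any of its translates lies entirely inside $N_{GP(n,1)}(u_i)$ or $N_{GP(n,1)}(v_i)$. Shifting one side by one, so that each pair is a pair of opposite corners of one of the $C_4$-faces of the prism, fixes this. An extra virtue of the pairing argument, compared to the alternative ``partition into $C_4$'s'' via Propositions~\ref{prop} and~\ref{cor2}, is that it handles all $n\ge 3$ in a single line, including odd $n$, where no such partition into $C_4$'s exists.
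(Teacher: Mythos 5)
Your proof is correct and rests on the same idea as the paper's: a pairing strategy for Dominator, where your matching $\{u_i,v_{i+1}\}$ is just the mirror image of the paper's pairs $(u_i,v_{i-1})$, and your verification that every open neighbourhood contains a pair is the same correctness check the paper sketches. The only (minor) difference is that the paper uses the pairing only for odd $n$ and handles even $n$ separately by partitioning the vertex set into $C_4$'s via Propositions~\ref{prop} and~\ref{cor2}, whereas your single pairing covers all $n\ge 3$ uniformly, which slightly streamlines the argument.
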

\begin{proof}
Let $V(\mathcal{G}_1) = \{u_1,u_2,...,u_n, v_1,v_2,...,v_n\}$ and let $E(\mathcal{G}_1) = \{u_{i-1}u_i, v_{i-1}v_i | i\in \{2,...,n\}\} \cup \{u_1u_n,v_1v_n\} \cup \{u_iv_i | i \in \{1,...,n\}\}$. \\
If $n$ is even, then $V(\mathcal{G}_1)$ can be partitioned into $4$-sets, each inducing a $C_4$. So, by Proposition \ref{prop}, Dominator wins. \\
Let $n$ be odd. Then, Dominator uses the pairing strategy, where the pairs are $(u_i,v_{i-1})$ for $i\in \{2,...,n\}$ and $(u_1,v_n)$. We need to prove that this is his winning strategy. Suppose that at some point of the game we have a situation that Staller's set contains vertices $u_i, u_{i+2}, v_{i+1}$. This means that vertex $u_{i+1}$ stays uncovered by Dominator. 
This is not possible, because when Staller claimed vertex $u_{i+2}$ (or $v_{i+1}$), Dominator, according to his strategy, must claim vertex $v_{i+1}$ (or $u_{i+2}$) and in this way he covers vertex $u_{i+1}$. A contradiction. 
\end{proof}

\begin{proof}[of Theorem \ref{thm4}]
Consider Generalized Petersen graph $\mathcal{G}_2 = GP(n,2)$. \\
Let $n=6$. Suppose that in his first move Dominator claims some vertex $l$ which belongs to internal polygon (see Figure \ref{GPG2a}).
Staller responds with $s_1 = u$. We consider the following cases:
\begin{enumerate}
\item[Case 1.] $d_2 \in \{z,t,v,w,y\}$. \\
Then, $s_2 = t_2$ which forces $d_3 = t_1$. By playing $s_3 = r_1$ Staller creates a double trap $r-r_2$. In her next move Staller isolates either $r_2$ or $r$.  
\item[Case 2.] $d_2 \in \{t_1, t_2 \}$. \\
Then, $s_2 = z$ which forces $d_3 = w$. By playing $s_3 = r_2$ Staller creates a double trap $r-r_1$. By claiming $r$ or $r_1$ in her next move, Staller will isolate either $r_1$ or $r$.
\item[Case 3.] $d_2 \in \{r, r_1, r_2\}$. \\
Then, $s_2 = z$ which forces $d_3 = w$. By playing $s_3 = t_2$ Staller creates a double trap $y-t_1$. In her next move Staller isolates either $l$ or $t$. 
\end{enumerate}
Next, suppose that Dominator plays his first move on the external polygon. Let $d_1 = y$. 
Then, Staller responds with $s_1 = u$. If $d_2 \in \{z,t,v,w,l\}$ or $d_2 \in \{t_1, t_2\}$, then Staller can use the same strategy as in Case 1 or Case 2, respectively. \\
Otherwise, if $d_2 \in \{r, r_1, r_2\}$, then Staller plays in the following way: $s_2 = t_1$ which forces $d_3 = t_2$. Next, by playing $s_3 = w$, Staller creates $l-z$ trap. By claiming $z$ or $l$ in her next move Staller will isolate $v$ or $y$.  \\ 

Let $n=7$. Due to symmetries of the graph, the vertex $l$ can be the first Dominator's move (see Figure \ref{GPG2b}). 
Staller responds with $s_1 = u$. We consider the following cases:
\begin{enumerate}
\item[Case 1.] $d_2 \in \{t, r, w, z, y_2\}$. \\
Then, $s_2 = t_1$ which forces $d_3 = t_2$ and $s_3 = r_1$ which forces $d_4 = r_2$. Next, $s_4 = v$ and Staller creates a double trap $y_3-y_1$. In her next move Staller isolates either $w$ or $z$.
\item[Case 2.] $d_2 \in \{v, t_1, t_2, y_3\}$. \\
Then, $s_2 = w$ which forces $d_3 = z$. By playing $s_3 = r_2$ Staller creates a double trap $r_1-y_1$. In her next move Staller isolates either $r$ or $y_3$. 
\item[Case 3.] $d_2 \in \{r_1, r_2, y_1\}$. \\
Then, $s_2 = w$ which forces $d_3 = z$, $s_3 = t$ which forces $d_4 = y_2$ and $s_4 = r$ which forces $d_5 = v$. Next, $s_5 = t_2$ and Staller creates a double trap $y_3-t_1$. In her next move Staller isolates either $r_2$ or $t$. 
\end{enumerate}
Let $n=8$ and suppose that in his first move Dominator claims a vertex $l$ which belongs to internal polygon (see Figure \ref{GPG2c}).
Staller responds with $s_1 = u$. We consider the following cases: 
\begin{enumerate}
\item[Case 1.] $d_2 \in \{v, t_1, t_2, y_5\}$. \\
Then, $s_2 = r_2$ which forces $d_3 = r_1$ and $s_3 = t$ which forces $d_4 = y_3$. Next, by playing $s_4 = w$ Staller creates a double trap $y_4-z$. 
By claiming $z$ or $y_4$ in her fifth move, Staller isolates either $v$ or $t_1$. 
\item[Case 2.] $d_2 \in \{w, t, y_3, y_4, z\}$. \\
Then, $s_2 = t_1$ which forces $d_3 = t_2$ and $s_3 = r_1$ which forces $d_4 = r_2$. Next, by playing $s_4 = v$ Staller creates a double trap $y_1-y_5$. By claiming $y_5$ or $y_1$ in her fifth move, Staller isolates either $w$ or $z$.
\item[Case 3.] $d_2 \in \{r_1, r_2\}$. \\
Then, $s_2 = t_2$ which forces $d_3 = t_1$, $s_3 = r$ which forces $d_4 = y_2$ and $s_4 = t$ which forces $d_5 = v$. Next, by playing $s_5 = w$ Staller creates a double trap $y_4-z$. 
By claiming $z$ or $y_4$ in her sixth move, Staller isolates $v$ or $t_1$.
\item[Case 4.] $d_2 \in \{r, y_1, y_2\}$. \\
Then, $s_2 = r_2$ which forces $d_3 = r_1$ and $s_3 = t$ which forces $d_4 = y_3$. Next, by playing $s_4 = w$, Staller creates a double trap $y_4-z$. 
By claiming $z$ or $y_4$ in her fifth move, Staller isolates $v$ or $t_1$.
\end{enumerate}
Next, suppose that Dominator plays his first move on the external polygon. Let $d_1 = y_2$. 
Then, Staller responds with $s_1 = u$. If $d_2 \in \{v, t_1, t_2, y_5\}$ or if $d_2 \in \{w, t, y_3, y_4, z\}$, then Staller can use the same strategy as in Case 1 or Case 2, respectively. If $d_2 \in \{r_1, r_2\}$, then Staller plays in the following way: $s_2 = w$ which forces $d_3 = z$, $s_3 = y_4$ which forces $d_4 = t$. Next, by playing $s_4 = t_2$ Staller creates $l-t_1$ trap. By playing $l$ or $t_1$ in her next move Staller will isolate either $y_3$ or $t$. Finally, if $d_2 \in \{r, y_1, l\}$, then Staller can use the same strategy as in Case 4. 

\begin{figure}[!h]
    \centering
    \subfigure[$GP(6,2)$ \label{GPG2a}]{\includegraphics[scale=0.7]{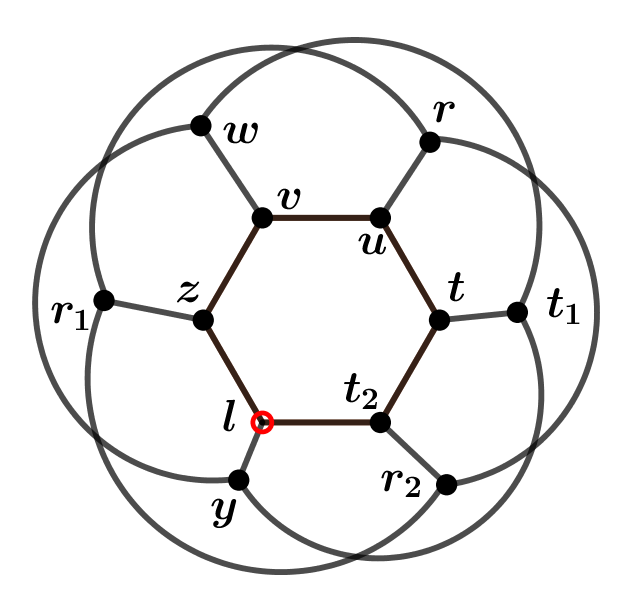}} 
    \hspace*{0.5cm}
    \subfigure[$GP(7,2)$ \label{GPG2b}]{\includegraphics[scale = 0.65]{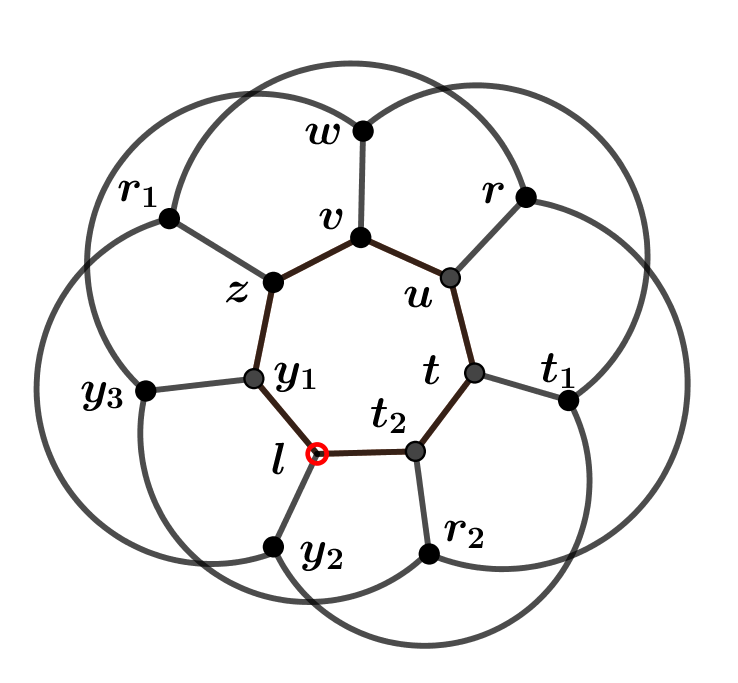}} 
    \hspace*{0.5cm}
    \subfigure[$GP(8,2)$ \label{GPG2c}]{\includegraphics[scale = 0.5]{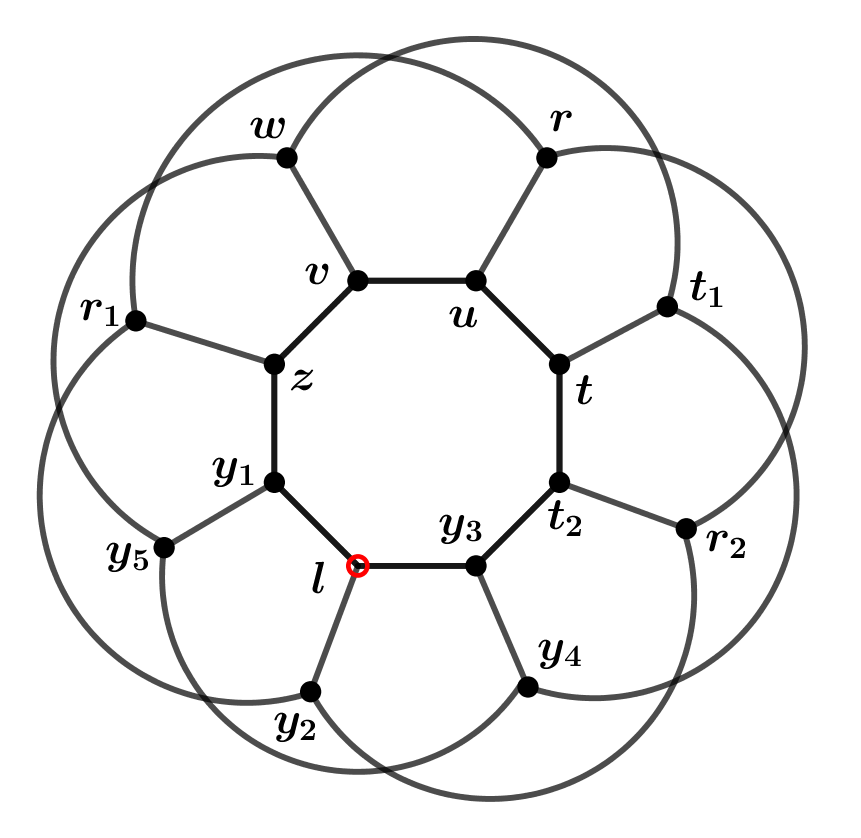}} 
    \caption{Generalized Petersen graph.}
    \label{GPG2}
\end{figure}
\noindent Consider Generalized Petersen graph $\mathcal{G}_2 = GP(n,2)$, where $n\geq 9$. \\
After Dominator's first move, Staller can find subgraph  $\tau \subseteq \mathcal{G}_2$ such that $d_1 \notin V(\tau)$. 
Suppose that in his first move Dominator claims some vertex $l$ which belongs to internal polygon (see Figure \ref{GPGa}). Consider subgraph $\tau \subseteq \mathcal{G}_2$ with the vertex set $\{u,v,w,z,t,t_1,t_2,r_1,r_2,y_1,y_2,y_3,y_4,y_5,y_6\}$, where the vertices $u$ and $v$ are at distance 4 from the vertex $l$ on the internal polygon. The subgraph $\tau $ is illustrated in Figure \ref{GPGb}. 
In her first move Staller claims $s_1 = u$. It is enough to consider the cases when $d_2 \in \{v,w,t,t_1,t_2, y_4, y_5, y_6\}$.
\begin{enumerate}
\item[Case 1.] $d_2 \in \{w, t, y_4, y_5 \}$. \\
Then, $s_2 = t_1$ which forces $d_3 = t_2$ and $s_3 = r_1$ which forces $d_4 = r_2$. Next, by playing $s_4 = v$ Staller creates a double trap $y_1-y_6$. 
In her next move Staller isolates either $w$ or $z$ by claiming $y_6$ or $y_1$. 
\item[Case 2.] $d_2 = \{v, t_1, t_2, y_6\}$. \\
Then, $s_2 = r_2$ which forces $d_3 = r_1$ and $s_3 = t$ which forces $d_4 = y_4$. Next, by playing $s_4 = w$ Staller creates a double trap $y_5-z$. 
In her next move Staller isolates either $t_1$ or $v$ by claiming $y_5$ or $z$. 
\end{enumerate}
\begin{figure}[!h]
    \centering
    \subfigure[$GP(9,2)$ \label{GPGa}]{\includegraphics[scale=0.9]{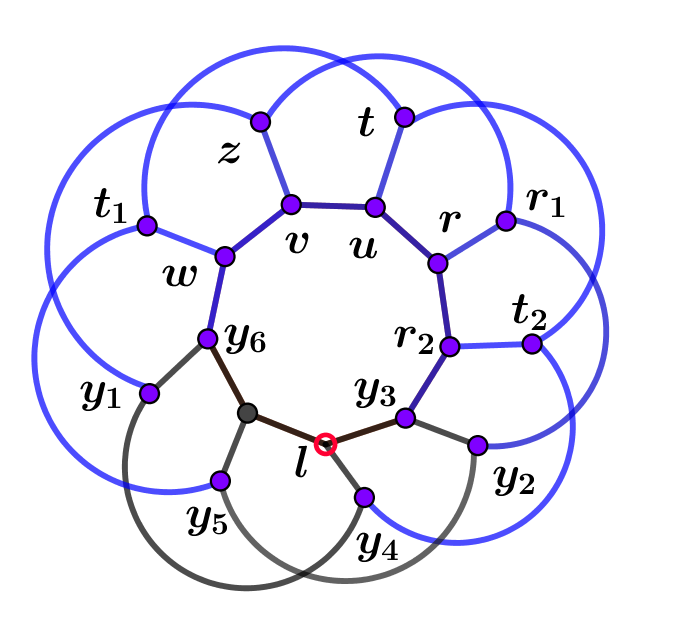}} 
    \hfill
    \subfigure[$\tau $ \label{GPGb}]{\includegraphics[scale = 0.7]{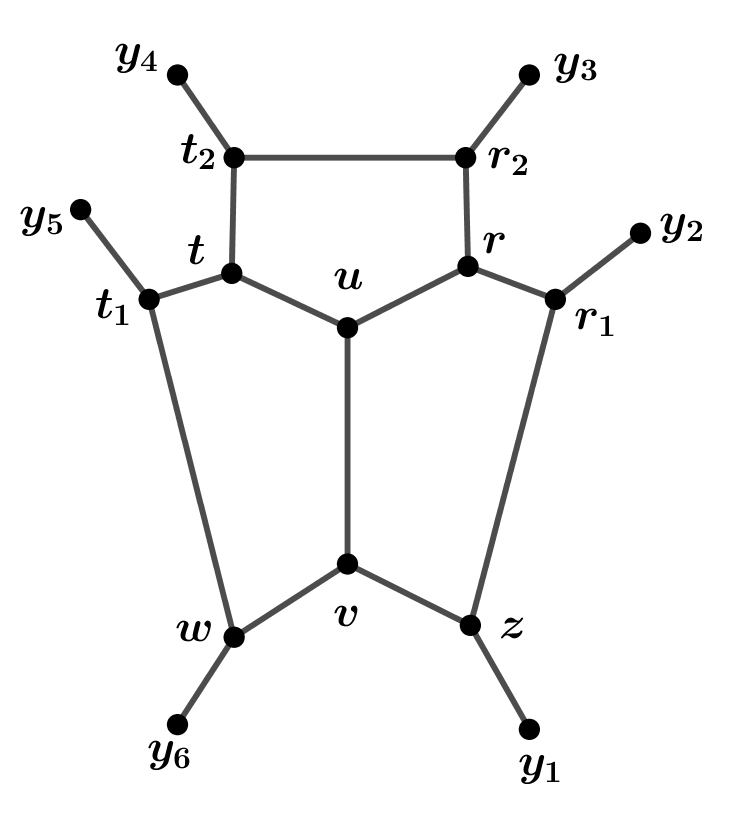}} 
    \caption{(a) Generalized Petersen graph $GP(9,2)$ and (b) its subgraph $\tau $.}
    \label{GPG}
\end{figure}
\end{proof}

In the following we prove Theorem \ref{thm5}.

\begin{proof}[of Theorem \ref{thm5}]
Consider the cubic bipartite graph on $n$ vertices with the vertex set
$\{u_1,...,u_{n/2},v_1,...,v_{n/2}\}$. Let $U = \{u_1,...,u_{n/2}\}$ and $V = \{v_1,...,v_{n/2}\}$ be a bipartition of the graph.
Add an edge from each $u_i$ to $v_i$, $v_{i+i}$ and $v_{i+2}$ (with indices modulo $n/2$). \\ 

\noindent W.l.o.g.\ suppose that $s_1 = u_i \in U$, for some $i\in \{1,2,...,n/2\}$. Then $d_1 = u_{i-1} \in U$, modulo $n/2$.  
Note that every two vertices $u_{i-1}$ and $u_i$ from $U$ have two common neighbours in $V$, $v_i$ and $v_{i+1}$ (and every two $v_{i-1}, v_i \in V$ have two common neighbours in $U$, $u_{i-1}, u_{i-2}$). 
In every other round $r\geq 2$, Dominator plays in the following way. 
If Staller claims a vertex which is a common neighbour of two vertices, say  $u_{k-1}$ and $u_k$ such that for example, $u_{k-1} \in \mathfrak{D}$ and $u_k \in \mathfrak{S}$, then Dominator responds by claiming the other common neighbour of these two vertices. Otherwise, if Staller claimed some vertex $u_l$ (or $v_l$) which is not common neighbour of any two vertices, $x,y \in U$ or $V$, such that, for example, $x\in \mathfrak{D}$ and $y\in \mathfrak{S}$  (or vice versa), then Dominator claims a free vertex $u_{l-1}$ or $u_{l+1}$, with preference $u_{l-1}$ (or, $v_{l-1}$ or $v_{l+1}$ with preference $v_{l-1}$) modulo $n/2$. If Dominator can not find such a free vertex, he claims an arbitrary free vertex from the graph with the preference that a vertex is a neighbour of vertex which is claimed by him earlier in the game. \\ \\
We prove that this is a winning strategy for Dominator. 
Suppose that $v_i,v_{i+1}, v_{i+2} \in \mathfrak{S}$ for some $i\in \{1,2...,n/2\}$ modulo $n/2$, that is, Staller isolated vertex $u_i$. 
This means that when Staller claimed $v_i$, Dominator responded with $v_{i-1}$, but then when Staller claimed $v_{i+1}$ (or $v_{i+2}$), according to his strategy, Dominator had to take $v_{i+2}$ (or $v_{i+1}$). A contradiction. 
\end{proof}

Finally, we consider MBTD game on the connected cubic graph which is disjoint union of claws and prove Theorem \ref{thm6}.

\begin{proof}[of Theorem \ref{thm6}]
The graph $G$ is a connected cubic graph on $4k$ vertices formed with $k\geq 2$ disjoint claws $\mathcal{C}_i$, $i\in \{1,...,k\}$. Let $V(\mathcal{C}_i) = \{x_i, y_i,z_i,t_i\}$, where $t_i$ is a center of $\mathcal{C}_i$, for every $i\in \{1,...,k\}$. \\
First, suppose that $k=2$. Let $E(G) = E(G[\mathcal{C}_1])\cup E(G[\mathcal{C}_2]) \cup \{x_1x_2, y_1y_2, z_1z_2, x_1y_2, y_1z_2, z_1x_2 \}$. \\
The graph can be partitioned into two 4-sets, $\{x_1,t_1,y_1,y_2\}$ and $\{z_1, z_2, t_2, x_2\}$ each inducing a $C_4$ (see Figure \ref{clawsa}). By Proposition \ref{prop} and Proposition \ref{cor2}, Dominator wins. \\ 
Let $k=3$. Let $E(G) = E(G[\mathcal{C}_1])\cup E(G[\mathcal{C}_2]) \cup E(G[\mathcal{C}_3]) \cup \{x_{i-1}x_i, y_{i-1}y_i, z_{i-1}z_i | i \in \{2,3\}\} \cup \{x_1x_3,y_1y_3,z_1z_3\}$. \\
It is enough to consider the case when $d_1 \in V(\mathcal{C}_1)$. The cases when $d_1 \in V(\mathcal{C}_2)$ or $d_1 \in V(\mathcal{C}_3)$ are symmetric. 
\begin{enumerate}
\item[Case 1.] $d_1 = x_1$. \\
Then, $s_1 = t_1$. After Dominator's second move either all vertices from $\mathcal{C}_2$ are free or all vertices from $\mathcal{C}_3$ are free. Suppose that all vertices from $\mathcal{C}_3$ are free. Also, at least two of the vertices $x_2, y_2, z_2$ must be free. Suppose that $x_2$ and $z_2$ are free. Then, $s_2 = z_3$ which forces $d_3 = z_2$. By $s_3 = x_3$ Staller creates a double trap $x_2-y_3$. In her next move Staller isolates either $x_1$ or $t_3$. \\
If $d_1 \in \{y_1, z_1\}$, the proof is very similar.
\item[Case 2.] $d_1 = t_1$. \\
Then, $s_1 = t_2$. 
\begin{enumerate}
\item[Case 2.1.] $d_2 = t_3$. Then, $s_2 = z_3$ which forces $d_3 = z_1$. By playing $s_3 = x_3$, Staller creates a double trap $x_1-y_3$. In her fourth move Staller isolates either $x_2$ or $t_3$. 
\item[Case 2.2.] $d_2 \in \{x_i, y_i, z_i\}$, $i\in \{1,2,3\}$. \\
Let $d_2 = x_i$. If $i=1$, then Staller will make her next move on $\mathcal{C}_3$ and she will force Dominator to play his next move on $\mathcal{C}_1$, if $i=3$, Staller will make her next move on $\mathcal{C}_1$ and force Dominator to play on $\mathcal{C}_3$. If $i=2$, then she can make her next move either on $\mathcal{C}_1$ or $\mathcal{C}_3$. \\
Suppose that $d_2 = x_1$. Then, 
$s_2 = z_3$ which forces $d_3 = z_1$. By playing $s_3 = y_3$ Staller creates a double trap $y_1-x_3$. In her fourth move Staller isolates either $y_2$ or $t_3$. \\
The proof is very similar if $d_2 = y_i$ or $d_2 = z_i$, $i\in \{1,2,3\}$.
\end{enumerate}
\end{enumerate}
\begin{figure}[!h]
    \centering
    \subfigure[Two claws \label{clawsa}]{\includegraphics[scale=0.8]{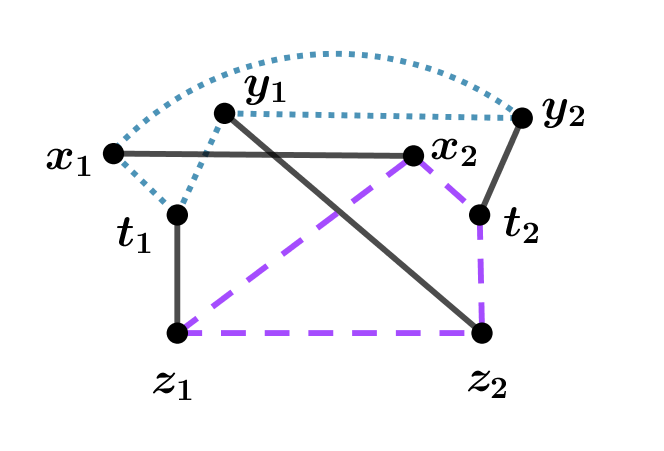}} 
    \hfill
    \subfigure[Three consecutive claws \label{clawsb}]{\includegraphics[scale = 0.8]{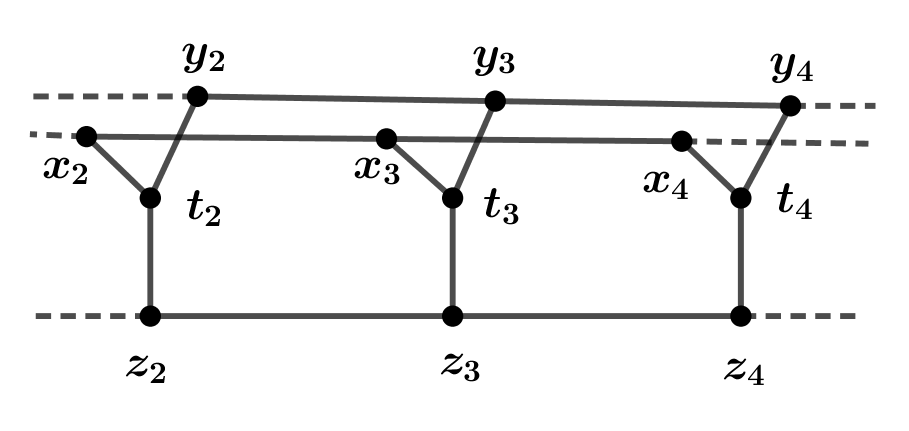}} 
    \caption{Examples of graphs formed of claws.}
\label{claws}
\end{figure}
Let $k\geq 4$. After Dominator's first move, Staller can find three consecutive claws $\mathcal{C}_{i-1}$, $\mathcal{C}_i$  $\mathcal{C}_{i+1}$ such that all vertices from these three claws are free. 
Suppose that these three claws are $\mathcal{C}_2, \mathcal{C}_3$ and $\mathcal{C}_4$. 
Staller will play on a subgraph with the vertex set $V(\mathcal{C}_2), V(\mathcal{C}_3) \cup V(\mathcal{C}_4)$, and the edge set $E(G[\mathcal{C}_2])\cup E(G[\mathcal{C}_3]) \cup E(G[\mathcal{C}_4]) \cup \{x_{i-1}x_i, y_{i-1}y_i, z_{i-1}z_i | i \in \{3,4\}\}$ (Figure \ref{clawsb}). In her first move Staller plays $s_1 = t_3$. \\
If $d_2 \in V(\mathcal{C}_2)$, then Staller will make her next move on $\mathcal{C}_4$ and she will force Dominator to play his next move on $\mathcal{C}_2$, if $d_2 \in V(\mathcal{C}_4)$, Staller will make her next move on $\mathcal{C}_2$ and force Dominator to play on $\mathcal{C}_4$. If $d_2 \in V(\mathcal{C}_3)$, then she can make her moves either on $\mathcal{C}_2$ or $\mathcal{C}_4$. \\
Suppose that $d_2 \in V(\mathcal{C}_2) \cup V(\mathcal{C}_3)$.  \\
Let $d_2 = x_2$ or $d_2 = t_2$. Then, $s_2 = z_4$ which forces $d_3 = z_2$. By playing
$s_3 = y_4$ Staller creates a double trap $y_2-x_4$. In her next move she isolates either $y_3$ or $t_4$.  \\
The cases when $d_2 \in \{y_2, z_2\}$ are symmetric. 
If $d_2 \in \{x_3, y_3, z_3\}$, Staller can apply the same strategy. 
\end{proof}

\begin{remark}
Note that if in the $D$-game on the connected cubic graph $G$ on $n\geq 6$ vertices after Dominator's first move Staller can find at least one of the subgraphs $G_1$, $G_4$, $\tau $, or subgraph which consists of three consecutive connected claws as in Figure \ref{claws}(b), such that all vertices from that subgraph are free, then the graph $G$ is $\mathcal{S}$. 
\end{remark}

\section{Concluding remarks}
In this paper we considered several types of connected cubic graphs in MBTD game and determined which are $\mathcal{D}$ and which are $\mathcal{S}$. In order to determine the outcome of the game, we have focused on finding a representative subgraph of the given graph. 
As we can see from this paper, finding a suitable subgraph makes it easier to determine the winner of the game and helps in characterization of cubic connected graphs. However, we have not covered all connected cubic graphs, so there are still open problems related to this topic. Therefore, it would be interesting to find some other subgraphs that could contribute to expanding the class of cubic connected graphs for which the winner is known in MBTD game.

\textbf{Biased games.} We are curious to know what will happen in the \emph{biased} setup of MBTD game. Given two positive integers, $a$ and $b$, representing the \emph{biases} of Staller and Dominator, respectively, in the biased $(a:b)$ MBTD game, Staller claims exactly $a$ and Dominator claims exactly $b$ elements of the board in each move. Now, if the biases of the players are the same, i.e.\ \emph{fair $(a:a)$ game}, for $a\geq 2$, we wonder whether the outcome of the games change compared to the outcome of the $(1:1)$ MBTD games that were previously studied.

Finally, we wonder how the situation changes if biased non-fair $(a:b)$ MBTD games are played, i.e.\ the games in which $a\neq b$.

\textbf{Biased fractional domination games.} 
Given a graph $G$, a real-valued function $f:V(G)\rightarrow [0,1]$ is a fractional dominating function if $\sum_{u\in N[v]}f(v) \geq 1$ holds for every vertex $v$ and its closed neighborhood $N[v]$ in $G$. The aim of the game is to minimize $\sum_{u\in N[v]}f(v)$. 
In the fractional domination game, introduced by~\cite{BT19}, one player's move consists of a (possibly infinite) sequence $(v_{i_1}, w_1), (v_{i_2}, w_2),...$, where $v_{i_1}, v_{i_2},...$ are vertices of graph $G$ and $w_1, w_2,..$ are real numbers from $(0,1]$ assigned to vertices and after player's move it is required that $\sum_{k\geq 1} w_k = 1$.

In the biased $(a:b)$ fractional total domination game on graph $G$, one round would consists of $a$ moves played by the first player followed by $b$ moves played by the second player such that after each round $\sum_{k\geq 1} w_k = a$ is required for the first player and $\sum_{k\geq 1} w_k = b$ is required for the second player.
So, it would be interesting to consider biased Maker--Breaker fractional total domination game adjusted to these rules and find the optimal winning strategies of players.

\acknowledgements
\label{sec:ack}
The authors would like to thank the anonymous referee for the valuable comments.

\nocite{*}
\bibliographystyle{abbrvnat}
\bibliography{bibfile-dmtcs}
\label{sec:biblio}

\end{document}